\newtheorem{theorem}{Theorem}[section]
\newtheorem{lemma}[theorem]{Lemma}
\newtheorem{corollary}[theorem]{Corollary}
\theoremstyle{definition}
\newtheorem{definition}[theorem]{Definition}
\newcommand{\defn}[1]{{\em #1}}
\theoremstyle{remark}
\newtheorem{remark}[theorem]{Remark}
\title{Divisible design digraphs and association schemes} 
\date{
\today
}
\author{
 Hadi Kharaghani\thanks{Department of Mathematics and Computer Science, University of Lethbridge,
Lethbridge, Alberta, T1K 3M4, Canada. \texttt{kharaghani@uleth.ca}} 
\and  
 Sho Suda\thanks{Department of Mathematics,  National Defense Academy of Japan, Yokosuka, Kanagawa 239-8686, Japan. \texttt{ssuda@nda.ac.jp}}
}
\begin{document}

\maketitle

\begin{abstract}
    Divisible design digraphs are constructed from skew balanced generalized weighing matrices and generalized Hadamard matrices.  
    Commutative and non-commutative association schemes are shown to be attached to the constructed divisible design digraphs.  
\end{abstract}

\section{Introduction}

Divisible design graphs \cite{HKM} have been received much attention as a generalization of $(v,k,\lambda)$-graphs, due to the important role they play in algebraic graph theory and design theory. 
The directed version of divisible design graphs, divisible design digraphs, was  introduced  and  studied
 extensively in establishing their feasible parameters, existence and non-existence results in \cite{CK} and in the case of Cayley graphs in \cite{CKS}. 
 A particular class of divisible design digraphs such that its adjacency matrix $A$ satisfies the condition $A+A^\top=J_v-I_m\otimes J_n$ is known to be equivalent to some association schemes with $3$ classes, and is studied in a framework of normally regular digraphs in \cite{J}. 
 In this note we deal with divisible design digraphs that do not satisfy the condition $A+A^\top=J_v-I_m\otimes J_n$.

Association schemes \cite{BI} in a way are a generalization of the centralizer algebra of the transitive permutation groups, and play an important role as the underlying space of coding theory and design theory.  
On the other hand, various design theoretic objects, such as symmetric designs, linked systems of symmetric designs,  symmetric group divisible designs, and Hadamard matrices, have been shown to produce different classes of association schemes \cite{D},  \cite{KS2017}, \cite{KS2019},  \cite{KS2020}, \cite{M}.   

The main purpose of this paper is to introduce different methods of construction for association schemes from divisible design digraphs. 
More precisely, we will pursue to construct families of divisible design digraphs from skew balanced generalized weighing matrices and generalized Hadamard matrices. 
These divisible design digraphs are based on the construction in \cite{IK}, \cite{KSS},  \cite{KS2018}, \cite{KS2019}. 
We then use the constructed families of divisible design digraphs to obtain commutative and non-commutative association schemes.

\section{Preliminaries}
As usual, we use $I_n,J_n,O_n$ to denote the identity matrix of order $n$, the all-ones matrix of order $n$, the zero matrix of order $n$, respectively. 
Let $U_n$ be a circulant matrix of order $n$ with the first row $(0,1,0,\ldots,0)$. 
Let $N_{n}$ be a negacirculant matrix with the first row $(0,1,0,\ldots,0)$ defined by 
\begin{align*}
N_{n}=\begin{pmatrix}
0 & 1 & 0 & \ldots & 0 \\
0 & 0 & 1 & \ldots & 0 \\
\vdots & \vdots & \vdots & \ddots & \vdots \\ 
0 & 0 & 0 & \ldots & 1 \\
-1 & 0 & 0 & \ldots & 0 
\end{pmatrix}. 
\end{align*}
Let $R_n=(r_{ij})$ be the back diagonal matrix of order $n$, that is $r_{ij}=\delta_{i+j,n+1}$. 
We denote by $[X]_{ij}$ the $(i,j)$-block of a block matrix $X$. 

\subsection{Divisible design digraphs}\label{sec:ddd}
A \emph{digraph} is a pair $\Gamma=(V,E)$, where $V$ is a finite nonempty set of vertices and $E$ is a set of ordered pairs (arcs) $(x,y)$ with $x,y\in V, x\neq y$. 

A digraph $\Gamma$ is called \emph{regular} of degree $k$ if each vertex of $\Gamma$ dominates exactly $k$ vertices and is dominated by exactly $k$ vertices.  
A digraph $\Gamma$ is \emph{asymmetric} if $(x,y)\in E$ implies $(y,x)\not\in E$. 
If $(x,y)$ is an arc, then we say that $x$ dominates $y$ or  $y$ is dominated by $x$. 
The \emph{adjacency matrix} of a digraph $\Gamma=(V,E)$ on $v$ vertices is a $v\times v$ $(0,1)$-matrix with rows and columns indexed by the elements $V$ such that for $x,y\in V$, $A_{xy}=1$ if $(x,y)\in E$ and $A_{xy}= 0$ otherwise. 
Note that a digraph $\Gamma$ is asymmetric  if and only if $A+A^\top$ is a $(0,1)$-matrix, and a digraph $\Gamma$ is regular of degree $k$  if and only if $AJ_v=A^\top J_v=kJ_v$ where $v$ is the number of the vertices.

\begin{definition}
Let $\Gamma$ be a regular asymmetric digraph of degree $k$ on $v$ vertices. The digraph $\Gamma$ is called a \emph{divisible design digraph with parameters $(v,k,\lambda_1,\lambda_2,m,n)$} if the vertex set can be partitioned into $m$ classes of size $n$, such that for any two distinct vertices $x$ and $y$ from the same class, the number of vertices $z$ that dominates or being dominated by both $x$ and $y$ is equal to $\lambda_1$, and for any two distinct vertices $x$ and $y$ from different classes, the number of vertices $z$ that dominates or being dominated by both $x$ and $y$ is equal to $\lambda_2$. 
\end{definition}
It is easy to see that a digraph $\Gamma=(V,E)$ is a divisible design digraph if and only if its adjacency matrix $A$ satisfies that $A+A^\top$ is a $(0,1)$-matrix and 
$$
AA^\top=A^\top A=k I_v+\lambda_1(I_m\otimes J_n-I_v) +\lambda_2(J_v-I_m\otimes J_n). 
$$
\subsection{Balanced generalized weighing matrices}\label{sec:BGW}
Let $G$ be a multiplicatively written finite group. 
A \emph{balanced generalized weighing matrix with parameters $(v, k, \lambda)$ over $G$}, or a $BGW(v, k, \lambda;G)$, is a matrix $W = (w_{ij})$
of order $v$ with entries from $G \cup \{0\}$ such that (i) every row of $W$ contains exactly
$k$ nonzero entries and (ii) for any distinct $i, h \in \{1, 2,\ldots, v\}$, every element of $G$ is
contained exactly $\lambda/|G|$ times in the multiset $\{w_{ij} w^{-1}_{hj} \mid 1 \leq j \leq v, w_{ij} \neq 0, w_{hj}\neq 0\}$.
When the group is a cyclic group generated by $U$, we write the entries of a BGW as $U^{w_{ij}}$.
A BGW $W=(U^{w_{ij}})_{i,j=1}^{n+1}$ with parameters $(n+1, n, n-1)$ over a cyclic group $G=\langle U \rangle$ of order $2m$ with diagonal $0$ is said to be \emph{skew} if $U^{w_{ji}}=U^{w_{ij}+m}$ holds for any distinct $i,j$. 
Skew BGWs will be dealt with in Sections~\ref{sec:DDD416}, \ref{sec:sbgw}. 
A BGW with $v=k$ is said to be a \emph{generalized Hadamard matrix}, which will be dealt with in Section~\ref{sec:GH}. 
\begin{lemma}[\cite{IK}]
Let $q,m$ be positive integers such that $q$ is an odd prime power, $(q-1)/m$ is odd. 
Then there is a skew $BGW(q+1,q,q-1)$ with zero diagonal over the cyclic group of order $m$.
\end{lemma}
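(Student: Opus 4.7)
The plan is to construct the matrix explicitly on the projective line $\mathbb{F}_q\cup\{\infty\}$, exploiting the quotient homomorphism from $\mathbb{F}_q^\ast$ onto the cyclic group $G$ of order $m$, and then verify the three required properties (zero diagonal, balanced ratio multisets, and skewness) by elementary finite-field arithmetic.

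First I would fix a generator $\alpha$ of $\mathbb{F}_q^\ast$ and the surjective homomorphism $\chi\colon\mathbb{F}_q^\ast\to G$ sending $\alpha$ to $U$; this exists because $m\mid q-1$, which is implicit in $(q-1)/m$ being an integer. Since $q$ is odd, $q-1$ is even, and the hypothesis that $(q-1)/m$ is odd then forces $m$ to be even and $(q-1)/2\equiv m/2\pmod{m}$. Consequently $\chi(-1)=\chi(\alpha^{(q-1)/2})=U^{m/2}$, which is the unique element of order $2$ in $G$. This identity is the single algebraic fact on which the whole construction hinges.

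Next I would label rows and columns of $W$ by $\mathbb{F}_q\cup\{\infty\}$ and define
\[
w_{ii}=0,\qquad w_{ij}=\chi(i-j)\ \text{for distinct }i,j\in\mathbb{F}_q,\qquad w_{i,\infty}=1,\qquad w_{\infty,i}=U^{m/2},
\]
so that the diagonal is zero and every row has exactly $q$ nonzero entries. The skew condition $w_{ji}=U^{m/2}w_{ij}$ for $i\neq j$ is then immediate: for $i,j\in\mathbb{F}_q$ it reduces to $\chi(j-i)=\chi(-1)\chi(i-j)$, and for the entries involving $\infty$ it is forced by the definition.

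For the BGW balance I would split into two cases. When $i,h\in\mathbb{F}_q$ are distinct, the M\"obius map $j\mapsto(i-j)/(h-j)$ sends $\mathbb{F}_q\setminus\{i,h\}$ bijectively onto $\mathbb{F}_q\setminus\{0,1\}$; applying $\chi$ to these $q-2$ values produces each nontrivial element of $G$ exactly $(q-1)/m$ times and the identity $(q-1)/m-1$ times, and the extra ratio at $j=\infty$, namely $1/1=1_G$, restores the balance to $(q-1)/m$ occurrences of every element of $G$. When one of the rows is indexed by $\infty$, the ratios reduce to $U^{\mp m/2}\chi(i-j)$ as $j$ ranges over $\mathbb{F}_q\setminus\{i\}$, and since $i-j$ then exhausts $\mathbb{F}_q^\ast$ exactly once, each element of $G$ occurs $(q-1)/m$ times as required.

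The main obstacle is the interaction between skewness and balance at the coordinate $\infty$: skewness forces the asymmetric convention $w_{\infty,i}=U^{m/2}\neq 1=w_{i,\infty}$, and one has to check that this asymmetry is compatible with the balance of the ratio multiset for every pair of rows. This is precisely what the two case-analyses above accomplish, and the parity hypothesis on $(q-1)/m$ enters only through the single identity $\chi(-1)=U^{m/2}$ that makes the skew relation consistent with the algebraic definition of $\chi$.
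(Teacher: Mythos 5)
Your construction is correct: labelling rows and columns by the projective line, setting $w_{ij}=\chi(i-j)$ for distinct $i,j\in\mathbb{F}_q$ and weighting the $\infty$ column and row by $1$ and $U^{m/2}$ respectively gives a valid skew $BGW(q+1,q,q-1)$, and your use of the parity hypothesis through the single identity $\chi(-1)=U^{m/2}$, together with the M\"obius-map count of the ratio multiset (each nonidentity element $(q-1)/m$ times, the identity $(q-1)/m-1$ times, restored by the $j=\infty$ column), verifies the zero diagonal, the balance, and the skewness. The paper gives no proof of this lemma, citing \cite{IK}; your argument is essentially a faithful, self-contained reconstruction of the construction from that reference.
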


\subsection{Association schemes}
Let $d$ be a positive integer. 
Let $X$ be a finite set of size $v$ and $R_i$ ($i\in\{0,1,\ldots,d\}$) be a nonempty subset of $X\times X$, and $A_i$ be the adjacency matrix of the graph $(X,R_i)$. 
An \emph{association scheme} with $d$ classes is a pair $(X,\{R_i\}_{i=0}^d)$, where the corresponding adjacency matrices $\{A_i\}_{i=0}^d$ to $\{R_i\}_{i=0}^d$ satisfy:

\begin{enumerate}[(i)]
\item $A_0=I_{v}$.
\item $\sum_{i=0}^d A_i = J_{v}$.
\item $A_i^\top \in\{A_1,A_2,\ldots,A_d\}$ for any $i\in\{1,2,\ldots,d\}$.
\item For all $i$ and $j$, $A_i A_j$ is a linear combination of $A_0,A_1,\ldots,A_d$.
\end{enumerate}
An association scheme is \emph{symmetric} if $A_i^\top=A_i$ holds for any $i$. 
An association scheme is \emph{commutative} if $A_iA_j=A_jA_i$ holds for any $i,j$, and \emph{non-commutative} otherwise. 
Note that if the scheme is symmetric, then it is commutative. 

We now define {\it eigenmatrices} for commutative and non-commutative cases. 
Note that for the non-commutative case, the eigenmatrices are not uniquely determined. See \cite{KS2018} for more detail and its applications to the character table.
The vector space spanned by $A_i$'s forms an algebra, denoted by $\mathcal{A}$ and is called the \emph{Bose-Mesner algebra} or \emph{adjacency algebra}.

First, assume the scheme is commutative. 
There exists a basis of $\mathcal{A}$ consisting of primitive idempotents, say $E_0=(1/|X|)J_{|X|},E_1,\ldots,E_d$. 
Note that $E_i$ is the projection onto a maximal common eigenspace of $A_0,A_1,\ldots,A_d$. 
Since  $\{A_0,A_1,\ldots,A_d\}$ and $\{E_0,E_1,\ldots,E_d\}$ are two bases in $\mathcal{A}$, there exist the change-of-bases matrices $P=(p_{ij})_{i,j=0}^d$, $Q=(q_{ij})_{i,j=0}^d$ so that
\begin{align*}
A_j=\sum_{i=0}^d p_{ij}E_i,\quad E_j=\frac{1}{|X|}\sum_{i=0}^d q_{ij}A_i.
\end{align*}
The matrices $P$ or $Q$ are said to be {\it first or second eigenmatrices} respectively. 

Second, assume that the scheme is non-commutative. 
The following is due to Higman \cite{H75}.  
Since the algebra $\mathcal{A}$ is semisimple, the adjacency algebra is isomorphic to $\oplus_{k=1}^n\text{Mat}_{d_k}(\mathbb{C})$ for uniquely determined positive integers $n,d_1,\ldots,d_n$. 
We write $\mathcal{I}=\{(i,j,k)\in\mathbb{N}^3 \mid 1\leq i,j\leq d_k, 1\leq k\leq n\}$ where $\mathbb{N}$ denotes the set of positive integers, and call the set $\mathcal{I}$ the \emph{index set of a dual basis}. 
For $k\in\{1,2,\ldots,n\}$ let $\varphi_k$ be an irreducible representation from $\mathcal{A}$ to $\text{Mat}_{d_k}(\mathbb{C})$. 
Let $E_{i,j}^{(k)}$ ($(i,j,k)\in\mathcal{I}$) be a basis of $\mathcal{A}$ such that $\varphi_k(E_{i,j}^{(k)})\in\text{Mat}_{d_k}(\mathbb{C})$, $E_{i,j}^{(k)}E_{i',j'}^{(k')}=\delta_{k,k'}\delta_{j,i'}E_{i,j'}^{(k)}$ and ${E_{i,j}^{(k)}}^*=E_{j,i}^{(k)}$, where $*$ denotes the conjugate transpose  operation.   
Since $A_\ell$ ($\ell\in\{0,1,\ldots,d\}$) and $E_{i,j}^{(k)}$ ($(i,j,k)\in\mathcal{I}$) are bases of the adjacency algebra, there exist complex numbers $p_{(i,j),\ell}^{(k)}$ and $q_{\ell,(i,j)}^{(k)}$ such that 
\begin{align*}
A_\ell=\sum_{(i,j,k)\in\mathcal{I}} p_{(i,j),\ell}^{(k)}E_{i,j}^{(k)},\quad
E_{i,j}^{(k)}=\frac{1}{v}\sum_{\ell=0}^{d} q_{\ell,(i,j)}^{(k)}A_\ell.
\end{align*}
For $k\in\{1,2,\ldots,n\}$, set $P_k=(p_{(i,j),\ell}^{(k)})$ and $Q_k=(q_{\ell,(i,j)}^{(k)})$, where $(i,j)$ runs over $\{(a,b)\mid 1\leq a,b\leq d_k\}$ and $\ell$ runs over $\{0,1,\ldots,d\}$. 
Note that the ordering of indices of rows of $P_k$ and columns of $Q_k$ is the lexicographical order. 
We then define  $(d+1)\times (d+1)$ matrices $P$ and $Q$ by 
\begin{align*}
P=\begin{pmatrix}
P_1\\
P_2\\
\vdots\\
P_n
\end{pmatrix}, \quad 
Q=\begin{pmatrix}
Q_1& Q_2& \cdots & Q_n
\end{pmatrix}.
\end{align*}

\section{Commutative association schemes with $5$ classes}\label{sec:DDD416}

Let $p=2qn+1$ be a prime power, where $q=(p-1)/(2n)$ is an odd integer and $n\geq4$ is the order of a Hadamard matrix $H$. 
Normalize $H$ to have the all-ones vector in the first row. 
For $i\in\{1,2,\ldots,n-1\}$, let $r_i$ be the $(i+1)$-th row of $H$, and define $C_i=r_i^\top r_i$.  

Define an $n^2\times n^2$ block matrix $D$ whose $(i,j)$-block of $D$ is an $n\times n$ matrix such that 
\begin{align*}
[D]_{ij}=\begin{cases}
O_{n} & \text{if } i=j,\\
C_{j-i} & \text{if } i<j,\\
-C_{j-i+n} & \text{if } i>j.
\end{cases}
\end{align*} 
Equivalently, $D$ is defined as 
\begin{align*}
D=\sum_{k=1}^{n-1}N_{n}^k \otimes C_{k}. 
\end{align*} 
Then the following is easy to see. 
\begin{lemma}\label{lem:ddd1}
$DD^\top=nI_{n}\otimes(nI_{n}-J_{n})$. 
\end{lemma}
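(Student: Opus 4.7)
The plan is to expand $DD^\top$ using the Kronecker-product structure and exploit three orthogonality facts: the symmetry of $C_k$, the mutual orthogonality of the rows $r_1,\dots,r_{n-1}$ of the normalized Hadamard matrix $H$, and the fact that $N_n$ is an orthogonal matrix. None of the steps should be delicate; the only place where one has to look twice is the $N_n N_n^\top = I_n$ claim.

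First I would write $D = \sum_{k=1}^{n-1} N_n^k \otimes C_k$ and observe that $C_k = r_k^\top r_k$ is symmetric, so $D^\top = \sum_{l=1}^{n-1} (N_n^\top)^l \otimes C_l$. Distributing and using the Kronecker identity $(A\otimes B)(A'\otimes B') = AA'\otimes BB'$ gives
\begin{equation*}
DD^\top = \sum_{k,l=1}^{n-1} N_n^k (N_n^\top)^l \;\otimes\; C_k C_l .
\end{equation*}

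The next step collapses the double sum to a single sum. Because $H$ is a normalized Hadamard matrix, the rows $r_1,\dots,r_{n-1}$ are mutually orthogonal and have squared norm $n$, so $C_k C_l = r_k^\top (r_k r_l^\top) r_l = n\,\delta_{k,l}\, C_k$. Hence only the diagonal terms $k=l$ survive and
\begin{equation*}
DD^\top = n\sum_{k=1}^{n-1} N_n^k (N_n^\top)^k \otimes C_k .
\end{equation*}

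The next check is that $N_n$ is orthogonal. From the explicit form of $N_n$, one reads off that its columns are $N_n e_1 = -e_n$ and $N_n e_j = e_{j-1}$ for $j\geq 2$; these are a signed permutation of the standard basis, so $N_n N_n^\top = I_n$ and consequently $N_n^k (N_n^\top)^k = I_n$ for every $k$. This is the one computational point that I would verify carefully.

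After substituting, $DD^\top = nI_n \otimes \sum_{k=1}^{n-1} C_k$, and it only remains to identify the inner factor. Including the first row $r_0 = (1,\dots,1)$, one has $\sum_{k=0}^{n-1} r_k^\top r_k = H^\top H = nI_n$ because $H$ is Hadamard, while $r_0^\top r_0 = J_n$. Therefore $\sum_{k=1}^{n-1} C_k = nI_n - J_n$, which yields $DD^\top = nI_n \otimes (nI_n - J_n)$ as desired.
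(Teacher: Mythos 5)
Your proof is correct: the collapse of the double sum via $C_kC_l=n\delta_{k,l}C_k$, the orthogonality of the signed permutation matrix $N_n$, and the identification $\sum_{k=1}^{n-1}C_k=H^\top H-r_0^\top r_0=nI_n-J_n$ are all valid, and together they give exactly the stated formula. The paper omits the proof entirely (``the following is easy to see''), and your argument is the natural verification it has in mind.
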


Let $N$ be an $n^2\times n^2$ block negacirculant matrix with the first block row $(O_{n},I_{n},O_{n},\ldots,O_{n})$ defined by 
\begin{align*}
N=N_{n}\otimes I_{n}=\begin{pmatrix}
O_{n} & I_{n} & O_{n} & \ldots & O_{n} \\
O_{n} & O_{n} & I_{n} & \ldots & O_{n} \\
\vdots & \vdots & \vdots & \ddots & \vdots \\ 
O_{n} & O_{n} & O_{n} & \ldots & I_{n} \\
-I_{n} & O_{n} & O_{n} & \ldots & O_{n} 
\end{pmatrix}. 
\end{align*}
Note that the order of $N$ is $2n$. 
Set $R=R_{n}\otimes I_{n}$.  

Let $W=(N^{w_{ij}})$ be a skew $BGW(p+1,p,p-1)$ with constant diagonal $0$ over the cyclic group $\langle N \rangle$. 
Define a $(p+1)n^2\times (p+1)n^2$ block matrix $B$ whose $(i,j)$-block is an $n^2\times n^2$ matrix such that 
\begin{align*}
[B]_{ij}=\begin{cases}
O_{n^2} & \text{if }i=j, \\
D N^{w_{ij}}R & \text{if }i\neq j.
\end{cases}
\end{align*}

Decompose $B$ into disjoint $(0,1)$-matrices $A_1$ and $A_2$: $B=A_1-A_2$.   
\begin{theorem}\label{thm:ddd1}
The matrices $A_1$ and $A_2$ are the adjacency matrices of divisible design digraphs with parameters $(n^2(p+1),\frac{(n^2-n)p}{2},\frac{(n^2-2n)p}{4},\frac{(n-1)^2(p-1)}{2},p+1,n^2)$. 
\end{theorem}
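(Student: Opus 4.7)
The plan is to first show that $B$ is skew-symmetric, which immediately forces $A_2 = A_1^\top$ and handles the asymmetric condition, then to compute $A_1 A_1^\top$ cleanly using a vanishing identity that kills the mixed products $B|B|$ and $|B|B$. Since each $C_k = r_k^\top r_k$ is $\pm 1$-valued and each $N^{w_{ij}}R$ is a signed permutation, $B$ has entries in $\{-1,0,1\}$, so $A_1 = (|B|+B)/2$ and $A_2 = (|B|-B)/2$ are disjoint $(0,1)$-matrices. For skew-symmetry I would prove the identity $N_n^k R_n = R_n N_n^{-k}$ (iterated from $N_n R_n = R_n N_n^{-1}$, verified on basis vectors), and combine it with $D^\top = -\sum_\ell N_n^\ell \otimes C_{n-\ell}$ (which follows from $N_n^n = -I_n$) to show each block $[B]_{ij} = D N^{w_{ij}} R$ is symmetric. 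Together with $[B]_{ji} = -[B]_{ij}$ from the skew-BGW relation $N^{w_{ji}} = -N^{w_{ij}}$, this gives $B^\top = -B$; hence $A_2 = A_1^\top$ and $A_1 + A_1^\top = |B|$ is a $(0,1)$-matrix, so both $A_1$ and $A_2$ are asymmetric digraphs.

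For regularity I would compute the row sum of each block: since $R\mathbf{1} = \mathbf{1}$ and $N^{w_{ij}}\mathbf{1}_{n^2} = (N_n^{w_{ij}}\mathbf{1}_n) \otimes \mathbf{1}_n$, the row sum of $[B]_{ij}$ equals $\sum_k (N_n^{k+w_{ij}}\mathbf{1}_n) \otimes (C_k\mathbf{1}_n) = 0$, because $C_k\mathbf{1}_n = r_k^\top(r_k\mathbf{1}_n) = 0$ for each $k \geq 1$. Each row of $B$ therefore has $pn(n-1)$ nonzero entries equally split between $+1$ and $-1$, giving the required out-degree $k = pn(n-1)/2$ for both $A_1$ and $A_2$; column sums match by skew-symmetry.

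The matrix identity rests on the vanishing $B|B| = |B|B = 0$. Each summand of $[B|B|]_{ij}$ contains the factor $(DN^{w_{ik}}R)\,|D| = \sum_k \bigl(N_n^{k+w_{ik}}R_n(J_n-I_n)\bigr) \otimes (C_k J_n)$, which is zero because $C_k J_n = r_k^\top(r_k\mathbf{1}_n)\mathbf{1}_n^\top = 0$; the mirror identity $J_n C_k = 0$ kills $[|B|B]_{ij}$. Consequently $4 A_1 A_1^\top = (|B|+B)(|B|-B) = |B||B|^\top + BB^\top$. The diagonal blocks of $BB^\top$ equal $p\,DD^\top = p(n^2 I_{n^2} - nI_n \otimes J_n)$ by Lemma~\ref{lem:ddd1}, while the off-diagonal blocks vanish since $\sum_{k \neq i,j} N^{w_{ik}-w_{jk}} = q\sum_{s=0}^{2n-1} N^s = 0$ by the BGW property ($\lambda/|G| = (p-1)/(2n) = q$) and $N^n = -I_{n^2}$. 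For $|B||B|^\top$ the underlying permutations $P_{ik} = U_n^{w_{ik} \bmod n} R_n \otimes I_n$ satisfy $P_{ik}P_{jk}^\top = U_n^{(w_{ik}-w_{jk}) \bmod n} \otimes I_n$; reducing the BGW multiset modulo $n$ gives $\sum_{k \neq i,j} P_{ik}P_{jk}^\top = 2q\,J_n \otimes I_n$, and combined with $|D||D|^\top = n(n-2) J_{n^2} + nI_n \otimes J_n$ this produces diagonal blocks $p\,|D||D|^\top$ and off-diagonal blocks $2qn(n-1)^2 J_{n^2}$. Adding $BB^\top$ and $|B||B|^\top$, the $I_{p+1} \otimes I_n \otimes J_n$ terms cancel, and matching coefficients of $I_v$, $I_m \otimes J_n$, and $J_v$ against the expansion $4[(k-\lambda_1)I_v + (\lambda_1-\lambda_2)I_m \otimes J_n + \lambda_2 J_v]$, using $p = 2qn+1$, verifies the parameters. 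The identity $A_1^\top A_1 = A_1 A_1^\top$ follows by rerunning the argument with $-B$ (equivalently, with the skew BGW $-W$) in place of $B$: this produces $A_2$ and gives $A_2 A_2^\top = A_1 A_1^\top$, so $A_1^\top A_1 = A_2 A_2^\top = A_1 A_1^\top$.

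The main obstacle is uncovering the vanishing $B|B| = |B|B = 0$; without it one must contend with the symmetric remainder $B|B| - |B|B$ in $4A_1 A_1^\top$, which is not transparently manageable. The vanishing itself reduces to the Hadamard-row identity $r_k\mathbf{1}_n = 0$ for $k \geq 1$, propagated through the Kronecker structure of $D$ and $|D|$.
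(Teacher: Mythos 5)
Your proposal is correct and follows essentially the same route as the paper: your $|B|$ is exactly the paper's $A_1+A_2$, your vanishing $B|B|=|B|B=O$ (from $C_kJ_n=J_nC_k=0$) is the paper's identity $(A_1+A_2)(A_1^\top-A_2^\top)=(A_1-A_2)(A_1^\top+A_2^\top)=O$, and the block computations of $BB^\top$ and $|B|\,|B|^\top$ via Lemma~\ref{lem:ddd1} and the BGW difference property coincide with the paper's \eqref{eq:ddd2} and \eqref{eq:ddd3}. The only cosmetic differences are that you establish $B^\top=-B$ by showing each block is symmetric rather than by the paper's direct chain of identities, and you obtain $A_1^\top A_1=A_1A_1^\top$ by rerunning the argument with $-B$ instead of computing all four products simultaneously.
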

\begin{proof}
First we show that $B^\top=-B$. 
The diagonal blocks of $B$ is the zero matrix of order $n^2$. 
For distinct $i,j\in\{1,2,\ldots,p+1\}$,  
\begin{align*}
    [B^\top]_{ij}&=[B]_{ji}^\top\\
    &=(D N^{w_{ji}}R)^\top\\
    &=(D (-N^{w_{ij}})R)^\top \quad\text{(by $W$ is a skew BGW and $N^{n}=-I$)}\\
    &=-R^\top(N^{w_{ij}})^\top D^\top\\
    &=-R N^{-w_{ij}} D^\top \quad\text{(by $R^\top=R$ and $N^\top=N^{-1}$)}\\
    &=-N^{w_{ij}}R D^\top \quad\text{(by $RN=N^{-1}R$)}\\
    &=-N^{w_{ij}} D R \quad\text{(by $RD^\top=DR$)}\\
    &=-DN^{w_{ij}}  R \quad\text{(by $ND=DN$)}\\
    &=-[B]_{ji}, 
\end{align*}
which proves $B^\top=-B$, and thus $A_1^\top=A_2$.  

We simultaneously calculate $A_iA_i^\top$ ($i=1,2$) and $A_iA_j^\top$ for $\{i,j\}=\{1,2\}$. 
First, we calculate $BB^\top=(A_1-A_2)(A_1^\top-A_2^\top)$ as follows. 
For $i,j\in\{1,2,\ldots,p+1\}$, 
\begin{align}
    [BB^\top]_{ij}&=\sum_{k=1}^{p+1}(1-\delta_{i,k})(1-\delta_{j,k})(D N^{w_{ik}}R)(D N^{w_{jk}}R)^\top\nonumber\\
    &=\sum_{k=1}^{p+1}(1-\delta_{i,k})(1-\delta_{j,k})D N^{w_{ik}-w_{jk}}D^\top.\label{eq:ddd1}
\end{align}
When $i=j$, \eqref{eq:ddd1} equals $npI_{n}\otimes(nI_{n}-J_{n})$ by Lemma~\ref{lem:ddd1}. 
When $i \neq j$, 
\begin{align*}
    \sum_{k=1}^{p+1}(1-\delta_{i,k})(1-\delta_{j,k})D N^{w_{ik}-w_{jk}}D^\top&=D(\sum_{k\neq i,j}N^{w_{ik}-w_{jk}})D^\top\\
    &=D(\frac{p-1}{2n}\sum_{\ell=0}^{2n-1}N^{\ell})D^\top\\
    &=O_{16n^2}, 
\end{align*}
where we used in the second last and last equalities the facts that $W=(N^{w_{ij}})$ is a BGW over $\langle N \rangle$ and $\sum_{\ell=0}^{2n-1}N^\ell=O_{n^2}$ respectively.   
Putting these into the blocks of $BB^\top$, we obtain 
\begin{align}\label{eq:ddd2}
BB^\top=npI_{p+1}\otimes I_{n}\otimes (nI_{n}-J_{n}).
\end{align}
Next we calculate $(A_1+A_2)(A_1+A_2)^\top$ as follows. 
Then $A_1+A_2=((1-\delta_{i,j})(J_{n}-U_{n}^{w_{ij}}R_{n})\otimes J_{n})_{i,j=1}^{p+1}$. 
In a similar fashion to \eqref{eq:ddd2}, we obtain  
\begin{align}\label{eq:ddd3}
    (A_1+A_2)(A_1+A_2)^\top&=npI_{p+1}\otimes (I_{n}\otimes J_{n}+(n-2)J_{n^2})\nonumber\\
    &\quad +(n-1)^2(p-1)(J_{p+1}-I_{p+1})\otimes J_{n^2}. 
\end{align}
Thirdly, it follows from the orthogonality between $C_i$ and $J_{4n}$ for $i\in\{1,2m\ldots,n-1\}$ that 
\begin{align}\label{eq:ddd4}
    (A_1+A_2)(A_1^\top-A_2^\top)=(A_1-A_2)(A_1^\top+A_2^\top)=O.
\end{align}
Putting \eqref{eq:ddd2}, \eqref{eq:ddd3}, and \eqref{eq:ddd4} together yields 
\begin{align*}
    A_1A_1^\top=A_2A_2^\top&=\frac{1}{4}n^2pI_{p+1}\otimes I_{n^2}+\frac{1}{4}n(n-2)pI_{p+1}\otimes J_{n^2}\nonumber\\
    &\quad +\frac{(n-1)^2(p-1)}{4}(J_{p+1}-I_{p+1})\otimes J_{n^2},\\
    A_1A_2^\top=A_2A_1^\top&=-\frac{1}{4}n^2pI_{p+1}\otimes I_{n^2}+\frac{1}{4}n(n-2)pI_{p+1}\otimes J_{n^2}\nonumber\\
    &\quad +\frac{(n-1)^2(p-1)}{4}(J_{p+1}-I_{p+1})\otimes J_{n^2}\nonumber\\
    &\quad +\frac{1}{2}npI_{p+1}\otimes I_{n}\otimes J_{n}.
\end{align*}
Therefore $A_1$ and $A_2$ are the adjacency matrices of divisible design digraphs with the desired parameters. 
\end{proof}

Define $A_0,A_3,A_4,A_5$ by 
\begin{align*}
A_0&=I_{n^2(p+1)},\\
A_3&=(J_{p+1}-I_{p+1})\otimes J_{n^2}-A_1-A_2,\\
A_4&=I_{p+1}\otimes I_{n}\otimes (J_{n}-I_{n}),\\
A_5&=I_{p+1}\otimes (J_{n}-I_{n})\otimes J_{n}.
\end{align*}

\begin{theorem}\label{thm:ddd2}
The set of matrices $\{A_0,A_1,\ldots,A_5\}$ forms a commutative association scheme with $5$ classes. 
\end{theorem}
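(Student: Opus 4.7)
The plan is to verify the four defining axioms of an association scheme together with commutativity. Axioms (i) and (ii) are immediate: $A_0 = I$ by definition, and the Kronecker identity $I_n \otimes I_n + I_n \otimes (J_n - I_n) + (J_n - I_n) \otimes J_n = J_{n^2}$ yields $A_0 + A_4 + A_5 = I_{p+1} \otimes J_{n^2}$, which combined with the definition of $A_3$ gives $\sum_{i=0}^5 A_i = J_{n^2(p+1)}$. For axiom (iii), $A_0, A_4, A_5$ are symmetric by inspection, $A_1^\top = A_2$ is contained in Theorem~\ref{thm:ddd1}, and $A_3^\top = A_3$ follows from the symmetry of both $T := (J_{p+1} - I_{p+1}) \otimes J_{n^2}$ and $A_1 + A_2$.

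The real work lies in axiom (iv) together with commutativity. Set $F_4 := I_n \otimes (J_n - I_n)$ and $F_5 := (J_n - I_n) \otimes J_n$, so that $A_4 = I_{p+1} \otimes F_4$ and $A_5 = I_{p+1} \otimes F_5$. Products among $\{A_0, A_4, A_5\}$ reduce to a direct closure check for the three-dimensional subalgebra $\mathrm{span}\{I_{n^2}, F_4, F_5\}$. The key technical step is how $F_4$ and $F_5$ act on the off-diagonal block $D N^{w_{ij}} R$ of $B := A_1 - A_2$. Orthogonality of the non-principal rows of $H$ to $\mathbf{1}$ yields $J_n C_k = C_k J_n = 0$, and consequently $F_4 D = D F_4 = -D$, $F_5 D = 0$, and $D(Y \otimes J_n) = 0$ for every $n \times n$ matrix $Y$. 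Combined with the trivial identities $R F_i = F_i R$ and $N F_4 = F_4 N$, these give $A_4 B = B A_4 = -B$ and $A_5 B = B A_5 = 0$. A parallel block-wise calculation using $[A_1 + A_2]_{ij} = (J_n - U_n^{w_{ij}} R_n) \otimes J_n$---recorded in the proof of Theorem~\ref{thm:ddd1}---yields $A_4(A_1+A_2) = (A_1+A_2)A_4 = (n-1)(A_1+A_2)$ and $A_5(A_1+A_2) = (A_1+A_2)A_5$ as an explicit element of the span. Adding and subtracting the two relations realizes $A_4 A_j$ and $A_5 A_j$ ($j \in \{1,2\}$) as explicit linear combinations of elements of $\{A_1, A_2, A_3\}$ and simultaneously establishes the commutation relations.

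For products among $\{A_1, A_2, A_3\}$, Theorem~\ref{thm:ddd1} already provides $A_1 A_1^\top = A_2 A_2^\top$ and $A_1 A_2^\top = A_2 A_1^\top$ as explicit combinations of $I_{p+1} \otimes I_{n^2}$, $I_{p+1} \otimes I_n \otimes J_n$, $I_{p+1} \otimes J_{n^2}$, and $(J_{p+1} - I_{p+1}) \otimes J_{n^2}$; rewriting these as $A_0$, $A_0 + A_4$, $A_0 + A_4 + A_5$, and $A_1 + A_2 + A_3$ respectively places $A_1 A_1, A_1 A_2, A_2 A_1, A_2 A_2$ in the span and in particular gives $A_1 A_2 = A_2 A_1$. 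Products involving $A_3 = T - A_1 - A_2$ reduce to computing $A_1 T$ and $T^2$: here $B(I_{p+1} \otimes J_{n^2}) = 0$ (again from $C_k J_n = 0$) and $(A_1+A_2)(I_{p+1} \otimes J_{n^2}) = n(n-1) T$ give $A_1 T = T A_1$ in the span, while $T^2$ is elementary. The main obstacle is careful bookkeeping; essentially all nontrivial algebraic content rests on the single identity $J_n C_k = 0$, after which every product collapses cleanly into the span of the six $A_i$.
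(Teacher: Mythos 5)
Your proposal is correct and follows essentially the same route as the paper: verify the axioms directly, handle the $\{1,2\}$ products via Theorem~\ref{thm:ddd1}, reduce products involving $A_3$ to the identity $A_1+A_2+A_3=(J_{p+1}-I_{p+1})\otimes J_{n^2}$, and handle products with $A_4,A_5$ blockwise using $J_nC_k=C_kJ_n=O$. The paper's own proof is considerably terser (it dismisses the $\{1,2\}\times\{4,5\}$ cases in one sentence), so your explicit identities $F_4D=DF_4=-D$, $F_5D=O$, and $A_4(A_1+A_2)=(n-1)(A_1+A_2)$ simply supply the details the paper omits.
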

\begin{proof}
The matrix $A_3$ can be written as 
$$
A_3=((1-\delta_{i,j})(U_{n}^{w_{ij}}R_{n})\otimes J_{n})_{i,j=1}^{p+1},  
$$
which shows $A_3^\top=A_3$. 

Set $\mathcal{A}=\text{span}_{\mathbb{C}}\{A_0,A_1,\ldots,A_5\}$. 
It suffices to show $A_iA_j=A_j A_i\in\mathcal{A}$ for any $i,j$. 
The cases that $i,j\in\{1,2\}$ were already shown in Theorem~\ref{thm:ddd1}, and the cases that $(i,j)\in\{(1,3),(2,3),$ $(3,1),(3,2),(3,3)\}$ follow from $A_1+A_2+A_3=(J_{p+1}-I_{p+1})\otimes J_{n^2}$.   
The cases that $i,j\in\{4,5\}$ are obvious and the remaining cases that $(i,j)\in\{1,2\}\times \{4,5\} \cup \{4,5\}\times \{1,2\}$ follow from the fact that the product of each block of $A_1,A_2,A_3$ are multiples of the all-ones matrix.  
\end{proof}
The intersection matrix $B_1=(p_{1,j}^k)_{j,k=0}^{5}$ is 
$$
B_1=\left(
\begin{smallmatrix}
 0 & 1 & 0 & 0 & 0 & 0 \\
 0 & \frac{1}{4} (n-1)^2 (p-1) & \frac{1}{4} (n-1)^2 (p-1) & \frac{1}{4} (n-1)^2 (p-1) & \frac{1}{4}n^2 p & \frac{1}{4}n (n-2) p \\
 \frac{1}{2} n (n-1) p & \frac{1}{4} (n-1)^2 (p-1) & \frac{1}{4} (n-1)^2 (p-1) & \frac{1}{4} (n-1)^2 (p-1) & \frac{1}{4}n ( n-2) p & \frac{1}{4}n (n-2) p \\
 0 & \frac{1}{2} (n-1) (p-1) & \frac{1}{2} (n-1) (p-1) & \frac{1}{2} (n-1) (p-1) & 0 & \frac{1}{2} n p \\
 0 & \frac{1}{2} n-1 & \frac{1}{2} n & 0 & 0 & 0 \\
 0 &  \frac{1}{2}n ( n-2) & \frac{1}{2}n ( n-2) & \frac{1}{2}n ( n-2) & 0 & 0 \\
\end{smallmatrix}
\right)
$$
and thus the eigenmatrices $P,Q$ are obtained as 
\begin{align*}
    P&=\left(
\begin{array}{cccccc}
 1 & \frac{1}{2} n (n-1) p & \frac{1}{2} n (n-1) p & n p & n-1 & n (n-1) \\
 1 & -\frac{1}{2} n (n-1) & -\frac{1}{2} n (n-1) & -n & n-1 &  n (n-1) \\
 1 & \frac{1}{2} n \sqrt{p} & \frac{1}{2} n \sqrt{p} & - n \sqrt{p} & n-1 & -n \\
 1 & -\frac{1}{2} n \sqrt{p} & -\frac{1}{2} n \sqrt{p} &  n \sqrt{p} &  n-1 & - n \\
 1 & \frac{1}{2}  n \sqrt{-p} & -\frac{1}{2} n \sqrt{-p} & 0 & -1 & 0 \\
 1 & -\frac{1}{2} n \sqrt{-p} & \frac{1}{2} n \sqrt{-p} & 0 & -1 & 0 \\
\end{array}
\right),\\
    Q&=\left(
\begin{array}{cccccc}
 1 & p & \frac{1}{2} (n-1) (p+1) & \frac{1}{2} (n-1) (p+1) &  \frac{1}{2}n (n-1) (p+1) & \frac{1}{2} n (n-1) (p+1) \\
 1 & -1 & \frac{p+1}{2 \sqrt{p}} & \frac{-p-1}{2 \sqrt{p}} & -\frac{ \sqrt{-1} n (p+1)}{2\sqrt{p}} & \frac{ \sqrt{-1} n (p+1)}{2\sqrt{p}} \\
 1 & -1 & \frac{p+1}{2 \sqrt{p}} & \frac{-p-1}{2 \sqrt{p}} & \frac{\sqrt{-1} n (p+1)}{2\sqrt{p}} & -\frac{\sqrt{-1} n (p+1)}{2\sqrt{p}} \\
 1 & -1 & -\frac{(n-1) (p+1)}{2 \sqrt{p}} & \frac{(n-1) (p+1)}{2 \sqrt{p}} & 0 & 0 \\
 1 & p & \frac{1}{2} (n-1) (p+1) & \frac{1}{2} (n-1) (p+1) & -\frac{1}{2} n (p+1) & -\frac{1}{2} n (p+1) \\
 1 & p & \frac{1}{2} (-p-1) & \frac{1}{2} (-p-1) & 0 & 0 \\
\end{array}
\right).
\end{align*}


\section{Non-commutative association schemes with $4m-1$ classes}\label{sec:sbgw}
In this section, we construct a non-commutative association scheme from any given skew $BGW(n+1,n,n-1)$ with zero diagonal over a cyclic group. 
The method described here is a modification of a construction based on a symmetric $BGW(n+1,n,n-1)$ with zero diagonal over a cyclic group in \cite{KS2018}. 

Let $W=(U_{2m}^{w_{ij}})_{i,j=1}^{n+1}$ be any skew $BGW(n+1,n,n-1)$ with constant diagonal zero over the cyclic group $\langle U_{2m} \rangle$ of order $2m$.

For $\ell\in\{0,1,\ldots,2m-1\}$, define $N_{\ell}$ to be a $2m(n+1)\times 2m(n+1)$ $(0,1)$-matrix as an $(n+1)\times(n+1)$ block matrix with $(i,j)$-block of order $2m$ equal to
\begin{align*} 
[N_\ell]_{ij}&=
\begin{cases}
O_{2m} & \text{ if } i=j,\\
U_{2m}^{w_{ij}+\ell} R_{2m} & \text{ if }i\neq j,
\end{cases}
\end{align*}
where 
recall that $R_{2m}$ is the back diagonal matrix of order $2m$. 
Note that $U_{2m}R_{2m}=R_{2m}U_{2m}^{-1}$.  
Then the following holds. 
\begin{theorem}\label{thm:bgw}
\begin{enumerate}
\item $\sum_{\ell=0}^{2m-1}N_{\ell}=(J_{n+1}-I_{n+1})\otimes J_{2m}$. 
\item For $\ell\in\{0,1,\ldots,2m-1\}$, $N_{\ell+m}^\top=N_\ell$, where the indices are taken modulo $2m$. 
\item For $\ell\in\{0,1,\ldots,2m-1\}$, $N_\ell(I_{n+1}\otimes J_{2m})=(I_{n+1}\otimes J_{2m})N_\ell=(J_{n+1}-I_{n+1})\otimes J_{2m}$. 
\item For $\ell,\ell'\in\{0,1,\ldots,2m-1\}$, 
\begin{align*}
N_\ell N_{\ell'}^\top=n I_{n+1}\otimes U_{2m}^{\ell-\ell'}+\frac{n-1}{2m}(J_{n+1}-I_{n+1})\otimes J_{2m}. 
\end{align*}
\end{enumerate}
In particular, $N_0,\ldots,N_{2m-1}$ are the adjacency matrices of divisible design digraphs with parameters $(2m(n+1),n,0,$ $\frac{n-1}{2m},n+1,2m)$. 
\end{theorem}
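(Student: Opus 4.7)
The plan is to establish the four structural identities by block-matrix computation and then read off the DDD parameters from the specialisation $\ell'=\ell$ of~(4). All four parts rest on the same small toolkit of facts about $U_{2m}$ and $R_{2m}$: $R_{2m}^2=I_{2m}$ and $R_{2m}U_{2m}^aR_{2m}=U_{2m}^{-a}$ for all $a$, which together give $(U_{2m}^aR_{2m})^\top=U_{2m}^aR_{2m}$ and $(U_{2m}^aR_{2m})(U_{2m}^bR_{2m})=U_{2m}^{a-b}$; the identity $\sum_{a=0}^{2m-1}U_{2m}^a=J_{2m}$; and the fact that every permutation matrix fixes $J_{2m}$.

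Parts (1)--(3) then go through quickly. For (1) the diagonal blocks vanish, and on an off-diagonal $(i,j)$-block the shift $\ell\mapsto w_{ij}+\ell$ is a bijection on $\mathbb{Z}/2m\mathbb{Z}$, so the block sum equals $J_{2m}R_{2m}=J_{2m}$. For (2), the skew BGW relation $w_{ji}\equiv w_{ij}+m\pmod{2m}$ combined with the symmetry of $U_{2m}^aR_{2m}$ yields $[N_{\ell+m}^\top]_{ij}=U_{2m}^{w_{ji}+\ell+m}R_{2m}=U_{2m}^{w_{ij}+\ell}R_{2m}=[N_\ell]_{ij}$. For (3), each nonzero block of $N_\ell$ is a permutation matrix, so multiplication by $J_{2m}$ on either side returns $J_{2m}$, while the diagonal blocks remain zero.

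Part (4) is the main step. Expanding the $(i,j)$-block and invoking the toolkit above collapses
\begin{align*}
[N_\ell N_{\ell'}^\top]_{ij}=\sum_{k\ne i,j}(U_{2m}^{w_{ik}+\ell}R_{2m})(U_{2m}^{w_{jk}+\ell'}R_{2m})^\top=U_{2m}^{\ell-\ell'}\sum_{k\ne i,j}U_{2m}^{w_{ik}-w_{jk}}.
\end{align*}
For $i=j$ every summand reduces to $I_{2m}$, giving $nU_{2m}^{\ell-\ell'}$. For $i\ne j$ the defining property of the $BGW(n+1,n,n-1)$ over $\langle U_{2m}\rangle$ says that $\{w_{ik}-w_{jk}:k\ne i,j\}$ covers each residue modulo $2m$ exactly $(n-1)/(2m)$ times, so the inner sum equals $\frac{n-1}{2m}J_{2m}$, which is absorbed by the $U_{2m}^{\ell-\ell'}$ prefactor. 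Assembling the blocks gives the stated formula.

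For the DDD conclusion I specialise $\ell'=\ell$ in (4) to obtain $N_\ell N_\ell^\top=nI_v+\tfrac{n-1}{2m}(J_v-I_{n+1}\otimes J_{2m})$ with $v=2m(n+1)$, which is precisely the matrix characterisation from Section~\ref{sec:ddd} with parameters $(2m(n+1),n,0,(n-1)/(2m),n+1,2m)$. Using (2) to write $N_\ell^\top N_\ell=N_{\ell+m}N_{\ell+m}^\top$ and reapplying (4) gives the same matrix, so $AA^\top=A^\top A$. Regularity of degree $n$ is immediate from the $n$ permutation-matrix blocks per block row, and $N_\ell+N_\ell^\top=N_\ell+N_{\ell+m}$ is a $(0,1)$-matrix because it is a partial sum in the disjoint block-support decomposition of (1). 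The only place where care is genuinely needed is the identification in part (4) of the index set $\{k\ne i,j\}$ with the set $\{k:W_{ik}\ne0\text{ and }W_{jk}\ne0\}$ over which the BGW counting axiom is formulated; since $W$ has zero diagonal and no other zeros, these coincide.
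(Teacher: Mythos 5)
Your proof is correct and follows essentially the same route as the paper: the same block-by-block computations using $R_{2m}U_{2m}^aR_{2m}=U_{2m}^{-a}$, the skew relation $w_{ji}\equiv w_{ij}+m$, and the BGW counting axiom for the off-diagonal blocks in part (4). Your treatment of the ``in particular'' clause (asymmetry via the disjoint supports coming from (1), and $N_\ell^\top N_\ell=N_{\ell+m}N_{\ell+m}^\top$) is actually spelled out more fully than in the paper, which leaves that deduction implicit.
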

\begin{proof}
(1): For distinct $i,j\in\{1,2,\ldots,n+1\}$, the $(i,j)$-block of $\sum_{\ell=0}^{2m-1}N_\ell$ is 
$$
[\sum_{\ell=0}^{2m-1}N_\ell]_{ij}=\sum_{\ell=0}^{2m-1}[N_\ell]_{ij}=\sum_{\ell=0}^{2m-1}U_{2m}^{w_{ij}+\ell} R_{2m}=U_{2m}^{w_{ij}}J_{2m}R_{2m}=J_{2m}, 
$$
which proves (1). 

(2): 
For distinct $i,j\in\{1,2,\ldots,n+1\}$, the transpose of $(j,i)$-block of $N_{\ell+m}$ is 
\begin{align*}
(U_{2m}^{w_{ji}+\ell+m} R_{2m})^\top=(U_{2m}^{w_{ij}+\ell} R_{2m})^\top=R_{2m}U_{2m}^{-w_{ij}-\ell}=U_{2m}^{w_{ij}+\ell} R_{2m}, 
\end{align*} 
which is equal to the $(i,j)$-block of $N_{\ell}$. 
Thus $N_{\ell+m}^\top=N_\ell$ holds. 

(3): It follows from the fact that each off-diagonal block of $N_\ell$ is a permutation matrix. 

(4): 
Let $\ell,\ell'\in\{0,1,\dots,2m-1\}$. 
For $i,j\in\{1,2,\ldots,n+1\}$, we calculate the $(i,j)$-block of $N_\ell N_{\ell'}^\top$ as follows:
\begin{align*}
[N_\ell N_{\ell'}^\top]_{ij}&=\sum_{k=1}^{n+1}(1-\delta_{i,k})(1-\delta_{j,k})U_{2m}^{w_{ik}+\ell}  R_{2m}(U_{2m}^{w_{jk}+\ell'} R_{2m})^\top\\
&=\sum_{k=1}^{n+1}(1-\delta_{i,k})(1-\delta_{j,k})U_{2m}^{w_{ik}-w_{jk}} U_{2m}^{\ell-\ell'} \\
&=\begin{cases}
n U_{2m}^{\ell-\ell'} &\text{ if } i=j,\\
\frac{n-1}{2m} J_{2m} & \text{ if } i\neq j,
\end{cases}
\end{align*}
which proves the desired equality.  
\end{proof}

We now construct an association scheme from the divisible design digraphs $N_\ell$ ($\ell\in\{0,1,\ldots,2m-1\}$). 
Define
\begin{align*}
A_{\ell,0}=I_{n+1}\otimes U_{2m}^\ell,\quad A_{\ell,1}=N_\ell 
\end{align*}
for $\ell \in\{0,1,\ldots,2m-1\}$. 
Then the following is the main theorem in this section. 
\begin{theorem}\label{thm:as11}
The set of matrices $\{A_{\ell,0},A_{\ell,1}\mid \ell=0,1,\ldots,2m-1\}$ is a non-commutative association scheme with $4m-1$ classes.
\end{theorem}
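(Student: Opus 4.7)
The plan is to verify axioms (i)--(iv) of an association scheme directly for the family
\begin{align*}
\{A_{\ell,0}, A_{\ell,1} : \ell = 0, 1, \ldots, 2m-1\},
\end{align*}
using Theorem~\ref{thm:bgw} as the computational engine. Axiom (i) is immediate since $A_{0,0} = I_{n+1} \otimes I_{2m}$. For axiom (ii), the identity $\sum_{\ell=0}^{2m-1} U_{2m}^{\ell} = J_{2m}$ gives $\sum_{\ell} A_{\ell,0} = I_{n+1} \otimes J_{2m}$, which combined with Theorem~\ref{thm:bgw}(1) sums to $J_{n+1} \otimes J_{2m} = J_v$. Axiom (iii) follows from $A_{\ell,0}^{\top} = A_{-\ell \bmod 2m,\, 0}$, together with Theorem~\ref{thm:bgw}(2) which gives $A_{\ell,1}^{\top} = A_{\ell+m \bmod 2m,\, 1}$.

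The substantive content is axiom (iv), which I would split into four cases. The products $A_{\ell,0} A_{\ell',0} = A_{\ell+\ell',\,0}$ are immediate. For $A_{\ell,0} A_{\ell',1}$, a blockwise computation using $U_{2m}^{\ell} \cdot U_{2m}^{w_{ij}+\ell'} R_{2m} = U_{2m}^{w_{ij}+\ell+\ell'} R_{2m}$ gives $A_{\ell+\ell',\,1}$. For $A_{\ell,1} A_{\ell',0}$, the commutation rule $R_{2m} U_{2m}^{\ell'} = U_{2m}^{-\ell'} R_{2m}$ introduces a sign flip and yields $A_{\ell-\ell',\,1}$. The most substantial case is $A_{\ell,1} A_{\ell',1}$: using axiom (iii) to rewrite $A_{\ell',1} = (A_{\ell'+m,\,1})^{\top}$ and then invoking Theorem~\ref{thm:bgw}(4), I obtain
\begin{align*}
A_{\ell,1} A_{\ell',1} = n\, A_{\ell-\ell'-m \bmod 2m,\,0} + \frac{n-1}{2m} \sum_{k=0}^{2m-1} A_{k,1},
\end{align*}
where Theorem~\ref{thm:bgw}(1) has been invoked once more to identify $(J_{n+1}-I_{n+1})\otimes J_{2m}$ with $\sum_k A_{k,1}$. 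All products therefore lie in the $\mathbb{C}$-span of the family.

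All $4m$ matrices in the family are pairwise distinct (the $A_{\ell,0}$ are block-diagonal with distinct nonzero blocks, whereas the $A_{\ell,1}$ are supported strictly off the block diagonal), so together with $A_{0,0} = I$ this gives exactly $4m - 1$ non-identity classes. Non-commutativity is visible already from the contrast $A_{\ell,0} A_{\ell',1} = A_{\ell+\ell',\,1}$ versus $A_{\ell',1} A_{\ell,0} = A_{\ell'-\ell,\,1}$, which differ whenever $\ell \not\equiv 0 \pmod{m}$. The only point of potential error is the index bookkeeping modulo $2m$, especially the sign flip from $R_{2m} U_{2m}^{\ell} = U_{2m}^{-\ell} R_{2m}$; but since this is already encoded into Theorem~\ref{thm:bgw}, there is no substantive obstacle beyond careful case analysis.
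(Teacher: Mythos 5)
Your proposal is correct and follows essentially the same route as the paper: conditions (i)--(iii) from Theorem~\ref{thm:bgw}(1),(2), and condition (iv) via the same four case-by-case products, with $A_{\ell,1}A_{\ell',1}$ reduced to Theorem~\ref{thm:bgw}(4). You are in fact slightly more careful than the paper on two points: your index $\ell-\ell'-m$ in $A_{\ell,1}A_{\ell',1}=nA_{\ell-\ell'-m,0}+\frac{n-1}{2m}\sum_k A_{k,1}$ is the correct one (the paper writes $\ell-\ell'$, an immaterial slip since the product lies in the span either way), and you explicitly verify non-commutativity and the class count, which the paper's proof omits.
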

\begin{proof}
The conditions (i)--(iii) are satisfied by Theorem~\ref{thm:bgw} (1), (2). 
We need to show the condition (iv) is satisfied. 
It is easy to see that  
\begin{align*}
A_{\ell,0} A_{\ell',0}=A_{\ell+\ell',0},\quad A_{\ell,0} A_{\ell',1}&=A_{\ell+\ell',1},\quad A_{\ell,1} A_{\ell',0}=A_{\ell-\ell',1}, 
\end{align*}
where the addition and subtraction of indices are taken modulo $2m$. 
Finally for $\ell,\ell'\in\{0,1,\ldots,2m-1\}$,   Theorem~\ref{thm:bgw} (4) reads 
\begin{align*}
A_{\ell,1}A_{\ell',1}=n A_{\ell-\ell',0}+\frac{n-1}{2m}(A_{0,1}+\cdots+A_{2m-1,1}),  
\end{align*}
where the subtraction of indices are taken modulo $2m$. 
Thus the condition (iv) is satisfied. 
\end{proof}

We view the cyclic group $\langle U_{2m} \rangle$ of order $2m$ as the additive group $\mathbb{Z}_{2m}$. 
Let $w=e^{2\pi\sqrt{-1}/(2m)}$. 
For $\alpha,\beta\in\mathbb{Z}_{2m}$, 
the irreducible character denoted $\chi_{\beta}$ is $\chi_{\beta}(\alpha)=w^{\alpha \beta}$. 
The \defn{character table} $K$ of the abelian group $\mathbb{Z}_{2m}$ is a $2m\times 2m$ matrix with rows and columns indexed by the elements of $\mathbb{Z}_{2m}$ with $(\alpha,\beta)$-entry equal to $\chi_{\beta}(\alpha)$. 
Note that $\chi_{\beta}(\alpha)=\chi_{\alpha}(\beta)$. 
Then the Schur orthogonality relation shows $K K^\top=2mI_{2m}$.


Define $F_{\alpha,0},F_{\alpha,1}$ as 
\begin{align*}
F_{\alpha,0}=\sum_{\gamma\in\mathbb{Z}_{2m}}\chi_\alpha(\gamma)A_{\gamma,0},\quad F_{\alpha,1}=\sum_{\gamma\in\mathbb{Z}_{2m}}\chi_{\alpha}(\gamma)A_{\gamma,1}.
\end{align*}
Using the intersection numbers described in Theorem~\ref{thm:as11}, we are able to conclude the following lemma. 
\begin{lemma}\label{lem:F}
The matrices $F_{\alpha,0},F_{\alpha,1}$ ($\alpha\in\mathbb{Z}_{2m}$) satisfy the following equations; for $\alpha,\beta\in\mathbb{Z}_{2m}$, 
\begin{align*}
F_{\alpha,0}F_{\beta,0}&=2\delta_{\alpha,\beta}mF_{\alpha,0}, \\
F_{\alpha,1}F_{\beta,1}&=2\delta_{\alpha,-\beta}nmF_{\alpha,0}+2\delta_{\alpha,0}\delta_{\beta,0}m(n-1)F_{0,1},\\
F_{\alpha,0}F_{\beta,1}&=2\delta_{\alpha,\beta}mF_{\alpha,1}, \\
F_{\alpha,1}F_{\beta,0}&=2\delta_{\alpha,-\beta}mF_{\alpha,1}. 
\end{align*}
\end{lemma}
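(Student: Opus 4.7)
The plan is to expand each of the four products bilinearly using the definitions of $F_{\alpha,0}$ and $F_{\alpha,1}$, substitute the four multiplication rules that were established at the beginning of the proof of Theorem~\ref{thm:as11}, and then collapse the resulting double sums by means of the character orthogonality relation $\sum_{\gamma\in\mathbb{Z}_{2m}}\chi_\alpha(\gamma)=2m\,\delta_{\alpha,0}$ (equivalently, $KK^\top=2mI_{2m}$, already recorded just above the lemma).

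For the first three identities the pattern is uniform. Writing $F_{\alpha,i}F_{\beta,j}=\sum_{\gamma,\delta}\chi_\alpha(\gamma)\chi_\beta(\delta)\,A_{\gamma,i}A_{\delta,j}$ and inserting the corresponding rule from Theorem~\ref{thm:as11}, I would substitute $\eta=\gamma+\delta$ in the $F_{\alpha,0}F_{\beta,0}$ and $F_{\alpha,0}F_{\beta,1}$ expansions, and $\eta=\gamma-\delta$ in the $F_{\alpha,1}F_{\beta,0}$ expansion. Using $\chi_\alpha(\gamma)\chi_\beta(\delta)=\chi_\alpha(\eta)\chi_{\beta-\alpha}(\delta)$ or $\chi_\alpha(\eta)\chi_{\alpha+\beta}(\delta)$ as appropriate, the inner sum over $\delta$ collapses to $2m\,\delta_{\alpha,\beta}$ in the first two cases and to $2m\,\delta_{\alpha,-\beta}$ in the third, while the outer sum over $\eta$ reassembles $F_{\alpha,0}$ or $F_{\alpha,1}$. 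This directly yields the stated formulas.

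The product $F_{\alpha,1}F_{\beta,1}$ is the only step with a genuine complication and is where I expect the main bookkeeping effort. Here the multiplication rule contributes two pieces: the ``diagonal'' term $nA_{\gamma-\delta,0}$, which after the substitution $\eta=\gamma-\delta$ produces $2nm\,\delta_{\alpha,-\beta}F_{\alpha,0}$ exactly as in the previous case; and the ``flat'' term $\tfrac{n-1}{2m}\sum_{k}A_{k,1}$, which is independent of $\gamma,\delta$ and therefore factors out of the double sum as
\begin{equation*}
\frac{n-1}{2m}\Bigl(\sum_{\gamma}\chi_\alpha(\gamma)\Bigr)\Bigl(\sum_{\delta}\chi_\beta(\delta)\Bigr)\sum_{k}A_{k,1}
=\frac{n-1}{2m}\cdot (2m\,\delta_{\alpha,0})(2m\,\delta_{\beta,0})\,F_{0,1},
\end{equation*}
after recognizing $\sum_{k}A_{k,1}=F_{0,1}$. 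Adding the two contributions gives the claimed $2\delta_{\alpha,-\beta}nmF_{\alpha,0}+2\delta_{\alpha,0}\delta_{\beta,0}m(n-1)F_{0,1}$.

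The only subtlety is making sure that the two independent Kronecker delta factors in the flat term are kept separate (rather than being collapsed into a single $\delta_{\alpha,-\beta}$), because $\alpha=0$ and $\beta=0$ each arise from an independent orthogonality sum; once that is tracked correctly, all four identities follow mechanically from Theorem~\ref{thm:as11} and the orthogonality of characters of $\mathbb{Z}_{2m}$, so no further structural input is needed.
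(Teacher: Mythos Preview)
Your proposal is correct and is precisely the approach the paper has in mind: the paper itself gives no explicit proof, merely stating that the lemma follows ``using the intersection numbers described in Theorem~\ref{thm:as11},'' which is exactly the bilinear expansion plus character orthogonality that you carry out. Your bookkeeping (in particular the separate $\delta_{\alpha,0}\delta_{\beta,0}$ factors in the flat term and the identification $\sum_k A_{k,1}=F_{0,1}$) is accurate.
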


For $i\in\{0,1,2,3\},j,k\in\{1,2\},\alpha\in \mathbb{Z}_{2m}$, let $E_i,E_{j,k}^{(\alpha)}$ be  
\begin{align*}
E_0&=\frac{1}{2(n+1)m}J_{2(n+1)m},\\
E_1&=\frac{1}{2(n+1)m}(n F_{0,0}-F_{0,1}),\\
E_2&=\frac{1}{4m}(F_{m,0}+\frac{1}{\sqrt{n}}F_{m,1}),\\
E_3&=\frac{1}{4m}(F_{m,0}-\frac{1}{\sqrt{n}}F_{m,1}),\\
E_{1,1}^{(\alpha)}&=\frac{1}{2m}F_{\alpha,0}, \quad E_{2,2}^{(\alpha)}=\frac{1}{2m}F_{-\alpha,0}, \quad E_{1,2}^{(\alpha)}=\frac{1}{2m\sqrt{n}}F_{\alpha,1}, \quad E_{2,1}^{(\alpha)}=\frac{1}{2m\sqrt{n}}F_{-\alpha,1}.   
\end{align*}

\begin{theorem}\label{thm:bgwas}
The matrices $E_0,E_1,E_2,E_3,E_{1,1}^{(\alpha)},E_{1,2}^{(\alpha)},E_{2,1}^{(\alpha)},E_{2,2}^{(\alpha)}$, $\alpha\in \{1,2,\ldots,m-1\}$, provide the Wedderburn decomposition of the adjacency algebra of the non-commutative association scheme in Theorem~\ref{thm:as11}. 
\end{theorem}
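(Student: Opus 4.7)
The plan is to use Lemma~\ref{lem:F} as the sole computational engine and verify that the listed matrices form a complete system of matrix units realizing the Wedderburn decomposition of $\mathcal{A}$. The structural observation driving everything is that, by Lemma~\ref{lem:F}, the product $F_{\alpha,i}F_{\beta,j}$ vanishes unless $\alpha\in\{\beta,-\beta\}$, with one exceptional term involving $F_{0,1}$ appearing only when $\alpha=\beta=0$. This block-diagonalizes $\mathcal{A}$ along the orbits of $\alpha\mapsto -\alpha$ on $\mathbb{Z}_{2m}$: the generic orbits $\{\alpha,-\alpha\}$ for $\alpha\in\{1,\ldots,m-1\}$ give $4$-dimensional subalgebras $\mathrm{span}\{F_{\pm\alpha,0},F_{\pm\alpha,1}\}$, while the two fixed points $\alpha=0$ and $\alpha=m$ give $2$-dimensional commutative subalgebras. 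Since $\{F_{\alpha,i}\}$ is the image of the original basis under the invertible character transform (Schur orthogonality $KK^\top=2mI_{2m}$), the dimensions add to $4(m-1)+2+2=4m=\dim\mathcal{A}$.

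For a generic $\alpha\in\{1,\ldots,m-1\}$ I would verify the matrix-unit identities $E_{i,j}^{(\alpha)}E_{k,l}^{(\beta)}=\delta_{\alpha,\beta}\delta_{j,k}E_{i,l}^{(\alpha)}$ by direct substitution into Lemma~\ref{lem:F}. The delicate calculations are $E_{1,2}^{(\alpha)}E_{2,1}^{(\alpha)}=\frac{1}{4m^{2}n}F_{\alpha,1}F_{-\alpha,1}=\frac{1}{2m}F_{\alpha,0}=E_{1,1}^{(\alpha)}$, which succeeds because $\alpha\neq 0$ eliminates the exceptional $F_{0,1}$ term, and $E_{1,2}^{(\alpha)}E_{1,2}^{(\alpha)}=0$ because $\alpha\notin\{0,m\}$ forces both delta factors in the $F_{\alpha,1}^{2}$ rule to vanish. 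Cross-block orthogonality $E_{i,j}^{(\alpha)}E_{k,l}^{(\beta)}=0$ for $\beta\notin\{\alpha,-\alpha\}$ is immediate from Lemma~\ref{lem:F}. For the exceptional block $\alpha=m$, the relation $F_{m,1}^{2}=2nmF_{m,0}$ shows that $\frac{1}{\sqrt{n}}F_{m,1}$ is an involution relative to the subalgebra identity $\frac{1}{2m}F_{m,0}$, so the standard $\frac{1}{2}(1\pm\cdot)$ construction yields $E_{2}$ and $E_{3}$. For $\alpha=0$, the identity $F_{0,0}+F_{0,1}=J_{2(n+1)m}$ (which follows from Theorem~\ref{thm:bgw}(1)) lets me identify $E_{0}=\frac{1}{2(n+1)m}J$ with the projection onto the trivial representation; a short computation using $F_{0,1}^{2}=2nmF_{0,0}+2m(n-1)F_{0,1}$ then confirms $E_{0}E_{1}=0$ and $E_{1}^{2}=E_{1}$.

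Completeness is finally checked by summing the central idempotents:
\[
E_{0}+E_{1}+E_{2}+E_{3}+\sum_{\alpha=1}^{m-1}\bigl(E_{1,1}^{(\alpha)}+E_{2,2}^{(\alpha)}\bigr)=\frac{1}{2m}\sum_{\alpha\in\mathbb{Z}_{2m}}F_{\alpha,0}=A_{0,0}=I,
\]
where the inner equality is Schur orthogonality applied to $\{\chi_{\alpha}\}$. Combined with the dimension count $4\cdot 1^{2}+(m-1)\cdot 2^{2}=4m$, this forces the listed matrices to realize the full Wedderburn decomposition $\mathcal{A}\cong\mathbb{C}^{4}\oplus\mathrm{Mat}_{2}(\mathbb{C})^{m-1}$. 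The main obstacle is cleanly separating the three regimes ($\alpha=0$, $\alpha=m$, generic $\alpha$) and, in particular, recognizing that the extra $2m(n-1)F_{0,1}$ contribution in $F_{0,1}^{2}$ at $\alpha=0$ forces the non-symmetric shape of $E_{1}$ rather than a naive $\frac{1}{2}(1\pm F_{0,1})$ combination.
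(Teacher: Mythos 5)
Your proposal is correct and follows essentially the same route as the paper, whose entire proof is the one-line remark that the theorem ``readily follows from Lemma~\ref{lem:F}''; you simply carry out in detail the matrix-unit verifications, the splitting into the three regimes $\alpha=0$, $\alpha=m$, and generic $\alpha$, and the completeness and dimension counts that the authors leave implicit. All of your computations check out against Lemma~\ref{lem:F} and Theorem~\ref{thm:bgw}(1).
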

\begin{proof}
It readily follows from Lemma~\ref{lem:F}. 
\end{proof}

\begin{remark}
In Theorem~\ref{thm:bgwas}, the matrix $Q$ is determined by the matrices $Q_k$ below.  
The adjacency algebra is isomorphic to $\oplus_{k=1}^{4+(m-1)}\text{Mat}_{d_k}(\mathbb{C})$ where $(d_k)_{k=1}^{4+(m-1)}=(1,1,1,1,2,\ldots,2)$
 with  
\begin{align*}
Q_1&=\begin{pmatrix}\chi_0\\ \chi_{0}\end{pmatrix}, Q_2=\begin{pmatrix}n \chi_{0}\\ -\chi_{0}\end{pmatrix},\\
Q_3&=\frac{n+1}{2}\begin{pmatrix}\chi_{m}\\ \frac{1}{\sqrt{n}}\chi_{m}\end{pmatrix}, Q_4=\frac{n+1}{2}\begin{pmatrix}\chi_{m}\\-\frac{1}{\sqrt{n}}\chi_{m}\end{pmatrix}, \\
Q_{k+4}&=(n+1)\begin{pmatrix}\chi_k & \bm{0} & \chi_{-k} & \bm{0} \\ \bm{0} & \frac{1}{\sqrt{n}}\chi_k& \bm{0} & \frac{1}{\sqrt{n}}\chi_{-k}\end{pmatrix},
\end{align*} 
for $k=1,2,\ldots,m-1$, where $\bm{0}$ denotes the column zero vector. 
\end{remark}

Define 
\begin{align*}
    B_{0,0}&=A_{0,0},\quad     B_{\alpha,0}=A_{\alpha,0}+A_{-\alpha,0},\quad 
    B_{m,0}=A_{m,0},\\
    B_{0,1}&=A_{0,1},\quad 
    B_{\alpha,1}=A_{\alpha,1}+A_{-\alpha,1},\quad
    B_{m,1}=A_{m,1},\\
    G_{0}&=E_0,\quad G_1=E_1,\quad G_2=E_2,\quad G_3=E_3,\\
    G_{\beta,4}&=E_{1,1}^{(\beta)}+E_{2,2}^{(\beta)}+E_{1,2}^{(\beta)}+E_{2,1}^{(\beta)},\\
    G_{\beta,5}&=E_{1,1}^{(\beta)}+E_{2,2}^{(\beta)}+E_{1,2}^{(\beta)}+E_{2,1}^{(\beta)}
\end{align*}
for $\alpha,\beta\in\{1,2,\ldots,m-1\}$. 
Then the following is similarly shown. 
\begin{theorem}
The set of matrices $\{B_{\alpha,0},B_{\alpha,1}\mid \alpha=0,1,\ldots,m\}$ forms a commutative association scheme and then the set of matrices $\{G_{i},G_{\beta,4},G_{\beta,5}\mid i\in\{0,1,\ldots,5\},\beta\in\{1,2,\ldots,m-1\}\}$ with the second eigenmatrix $Q$ given as 
\begin{align*}
Q=\bordermatrix{
                 & G_0             & G_1              & G_{2}           & G_{3} & G_{\beta,4} & G_{\beta,5} \cr
B_{\alpha,0}     & 1 & n & \frac{n+1}{2}\chi_{m}(\alpha)          & \frac{n+1}{2}\chi_{m}(\alpha) & (n+1)\chi_{\beta}(\alpha) & (n+1)\chi_{\beta}(\alpha) \cr
B_{\alpha,1} & 1       & -1   & \frac{n+1}{2\sqrt{n}}\chi_{m}(\alpha)  & -\frac{n+1}{2\sqrt{n}}\chi_{m}(\alpha) & \frac{n+1}{\sqrt{n}}\chi_{m}(\beta) & -\frac{n+1}{\sqrt{n}}\chi_{m}(\beta) \cr
},
\end{align*}
where $\alpha$ runs over the set $\{0,1,\ldots,m\}$ and $\beta$ runs over the set $\{1,2,\ldots,m-1\}$. 
\end{theorem}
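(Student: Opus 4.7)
The strategy parallels the proof of Theorem~\ref{thm:bgwas}, adapted to the coarser commutative scheme obtained by pooling the pairs $\alpha\leftrightarrow-\alpha$ in $\mathbb{Z}_{2m}$.

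First I would verify the scheme axioms for $\{B_{\alpha,0},B_{\alpha,1}\mid \alpha=0,1,\ldots,m\}$. The identity $B_{0,0}=I$ is immediate, and $\sum_\alpha B_{\alpha,0}+\sum_\alpha B_{\alpha,1}=I_{n+1}\otimes J_{2m}+(J_{n+1}-I_{n+1})\otimes J_{2m}=J_{2m(n+1)}$ follows from Theorem~\ref{thm:bgw}(1). For transpose closure, $A_{\alpha,0}^\top=A_{-\alpha,0}$ gives $B_{\alpha,0}^\top=B_{\alpha,0}$, while Theorem~\ref{thm:bgw}(2) gives $A_{\alpha,1}^\top=A_{\alpha+m,1}$ and hence $B_{\alpha,1}^\top=B_{m-\alpha,1}$. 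For multiplicative closure I would expand each product $B_{\alpha,\varepsilon}B_{\beta,\varepsilon'}$ as a sum of four products of the $A$'s and regroup using the rules in Theorem~\ref{thm:as11}; for instance,
\[
B_{\alpha,1}B_{\beta,1}=n(B_{\alpha-\beta,0}+B_{\alpha+\beta,0})+\tfrac{n-1}{m}\sum_\gamma B_{\gamma,1}.
\]
Since every such expansion is symmetric in $(\alpha,\beta)$ under the identification $B_{\gamma,\varepsilon}=B_{-\gamma,\varepsilon}$, commutativity is proved simultaneously.

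For the primitive idempotents, the four one-dimensional summands $E_0,E_1,E_2,E_3$ of Theorem~\ref{thm:bgwas} already lie in the $B$-subalgebra (they are combinations of $F_{0,\varepsilon}$ and $F_{m,\varepsilon}$, which under Fourier inversion involve only the symmetric sums $B_{\gamma,\varepsilon}$), so $G_0,G_1,G_2,G_3$ remain primitive in the coarser scheme. For $\beta\in\{1,\ldots,m-1\}$, setting $X_\beta=E_{1,1}^{(\beta)}+E_{2,2}^{(\beta)}$ and $Y_\beta=E_{1,2}^{(\beta)}+E_{2,1}^{(\beta)}$, the matrix-unit rules $E_{i,j}^{(k)}E_{i',j'}^{(k')}=\delta_{k,k'}\delta_{j,i'}E_{i,j'}^{(k)}$ yield $X_\beta^2=X_\beta$, $Y_\beta^2=X_\beta$ and $X_\beta Y_\beta=Y_\beta X_\beta=Y_\beta$, so $\tfrac12(X_\beta\pm Y_\beta)$ are two orthogonal rank-one idempotents splitting the central idempotent $X_\beta$ of the $2\times 2$ block. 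These are the intended $G_{\beta,4}$ and $G_{\beta,5}$ (the two definitions as printed in the statement are identical, which appears to be a typographical slip: one of them should carry a minus sign in front of $Y_\beta$, together with the factor $\tfrac12$ for normalization).

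Finally, to obtain the second eigenmatrix $Q$, I would use the Fourier inversion $A_{\gamma,\varepsilon}=\tfrac{1}{2m}\sum_\delta\chi_\delta(-\gamma)F_{\delta,\varepsilon}$, which rewrites $B_{\alpha,\varepsilon}=\tfrac{1}{2m}\sum_\delta(\chi_\delta(\alpha)+\chi_{-\delta}(\alpha))F_{\delta,\varepsilon}$ (with the singleton conventions at $\alpha=0,m$), and substitute $F_{\beta,0}=2mE_{1,1}^{(\beta)}$, $F_{-\beta,0}=2mE_{2,2}^{(\beta)}$, $F_{\pm\beta,1}=2m\sqrt{n}\,E_{1,2}^{(\beta)},\,2m\sqrt{n}\,E_{2,1}^{(\beta)}$. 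Reading off the coefficient of each $G_k$ then yields the stated row of $Q$: the character $\chi_m(\alpha)=(-1)^\alpha$ appears in the $G_2,G_3$ columns, the general $\chi_\beta(\alpha)$ appears in the $G_{\beta,4},G_{\beta,5}$ columns, and the factor $1/\sqrt{n}$ tracks the normalization between $E_{1,2}^{(\beta)}$ and $F_{\beta,1}$. The main obstacle is the clean bookkeeping of the Fourier/Wedderburn change of basis, together with first correcting the apparent typographical slips in the statement; once the signs are fixed, every entry of $Q$ reduces to a mechanical verification against Lemma~\ref{lem:F}.
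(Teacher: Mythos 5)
Your proposal is correct and follows essentially the route the paper intends (the paper's own ``proof'' is only the remark that the claim is ``similarly shown'' from Lemma~\ref{lem:F} and Theorem~\ref{thm:bgwas}): fuse the classes $\alpha\leftrightarrow-\alpha$, verify closure by expanding products of the $A_{\ell,\varepsilon}$, split each $2\times2$ Wedderburn block by $\tfrac12(X_\beta\pm Y_\beta)$, and read off $Q$ by Fourier inversion; you also correctly diagnose the typographical slip that makes the printed $G_{\beta,4}$ and $G_{\beta,5}$ identical. The only blemish is the coefficient in your sample product: for $\alpha,\beta\in\{1,\ldots,m-1\}$ the four cross terms give $\tfrac{2(n-1)}{m}\sum_{\gamma}B_{\gamma,1}$ rather than $\tfrac{n-1}{m}\sum_{\gamma}B_{\gamma,1}$ (the stated coefficient occurs only when exactly one of $\alpha,\beta$ lies in $\{0,m\}$), but this does not affect the closure or commutativity argument.
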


\section{Commutative association schemes with $3q-2$ classes
}\label{sec:GH}
In this section, we modify the construction of divisible design graphs in \cite{KS2018} to obtain divisible design digraphs attached to the finite fields of odd characteristic, and 
show that they provide commutative association schemes.  

Let $q=p^m$ be an odd prime power with $p$ an odd prime.  
We denote by $\mathbb{F}_q$ the finite field of $q$ elements, and $\mathbb{F}_q^*=\mathbb{F}_q\setminus\{0\}$.   
Let $H_q$ be the multiplicative table of $\mathbb{F}_q$, i.e., $H_q$ is a $q\times q$ matrix with rows and columns indexed by the elements of $\mathbb{F}_q$ with $(\alpha,\beta)$-entry equal to $\alpha \cdot \beta$. 
Then the matrix $H_q$ is a generalized Hadamard matrix with parameters $(q,1)$ over the additive group of $\mathbb{F}_q$. 

Let $\phi$ be a permutation representation of the additive group of $\mathbb{F}_q$ defined as follows.  
Since $q=p^m$, we view the additive group of $\mathbb{F}_q$ as $\mathbb{F}_p^m$.
Set a group homomorphism $\phi:\mathbb{F}_{p}^m\rightarrow GL_{q}(\mathbb{R})$ as $\phi((x_i)_{i=1}^m)= \otimes_{i=1}^m U_p^{x_i}$.   

From the generalized Hadamard matrix $H_q$ and the permutation representation $\phi$, we construct $q^2$ auxiliary matrices; 
for each $\alpha,\alpha'\in \mathbb{F}_q$, define a $q^2\times q^2$ $(0,1)$-matrix $C_{\alpha,\alpha'}$ to be 
\begin{align*}
C_{\alpha,\alpha'}=(\phi(\alpha(-\beta+\beta')+\alpha'))_{\beta,\beta'\in\mathbb{F}_q}. 
\end{align*}
Letting $x$ be an indeterminate, we define $C_{x,\alpha}$ by 
$$C_{x,\alpha}=\phi(\alpha)\otimes J_q$$
for $\alpha\in\mathbb{F}_q$. 

It is known that a symmetric Latin square of order $v$ with constant diagonal exists for any positive even integer $v$, see \cite{K}. 
Let $L=(L(a,a'))_{a,a'\in S}$ be a symmetric Latin square of order $q+1$ on the symbol set $S=\mathbb{F}_q\cup\{x\}$ with  constant diagonal $x$. 
Write $L$ as $L=\sum_{a\in S}a\cdot P_a$, where $P_a$ is a symmetric permutation matrix of order $q+1$. 
Note that $P_x=I_{q+1}$. 

From the $(0,1)$-matrices $C_{\alpha,\alpha'}$'s and the Latin square $L$, it is possible to construct divisible design digraphs 
as follows. 
For $\alpha \in \mathbb{F}_q$, we define\footnote{In \cite{KS2018}, the back diagonal matrix $R_{q^2}$ is post-multiplied with $C_{a,\alpha}$ so that the matrix $N_\alpha$ becomes symmetric.}  a $(q+1)q^2\times (q+1)q^2$ $(0,1)$-matrix $N_{\alpha}$ 
\begin{align*}
N_\alpha=(C_{L(a,a'),\alpha})_{a,a'\in S}=I_{q+1}\otimes\phi(\alpha)\otimes J_q+\sum_{a\in \mathbb{F}_q} P_a\otimes C_{a,\alpha}. 
\end{align*}
In order to show that each $N_{\alpha}$ is the adjacency matrix of a divisible design digraph and study more properties, we prepare a lemma on $C_{\alpha,\alpha'}$ and $P_a$.  
\begin{lemma}\label{lem:1}
\begin{enumerate}
\item For $\alpha,\alpha'\in\mathbb{F}_q$, $C_{\alpha,\alpha'}^\top=C_{\alpha,-\alpha'}$. 
\item For $a\in\mathbb{F}_q$, $\sum_{\alpha\in\mathbb{F}_q}C_{a,\alpha}=J_{q^2}$.  
\item For $\alpha\in\mathbb{F}_q$, $\sum_{a\in\mathbb{F}_q}C_{a,\alpha}=q I_q\otimes \phi(\alpha)+(J_q-I_q)\otimes J_q$. 
\item For $a\in\mathbb{F}_q$ and $\alpha,\alpha'\in\mathbb{F}_q$,
$C_{a,\alpha}C_{a,\alpha'}=q C_{a,\alpha+\alpha'}$.
\item For distinct $a,a'\in\mathbb{F}_q$ and $\alpha,\alpha'\in\mathbb{F}_q$, $C_{a,\alpha}C_{a',\alpha'}=J_{q^2}$.
\item For $a\in\mathbb{F}_q$ and $\alpha,\alpha'\in\mathbb{F}_q$, $(\phi(\alpha)\otimes J_q)C_{a,\alpha'}=C_{a,\alpha'}(\phi(\alpha)\otimes J_q)=J_{q^2}$. 
\item $\sum_{a,b\in \mathbb{F}_q,a\neq b} P_{a}P_{b}=(q-1)(J_{q+1}-I_{q+1})$. 
\end{enumerate}
\end{lemma}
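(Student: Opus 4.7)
The plan is to verify each of the seven items by a direct block-level computation, exploiting two structural inputs. First, $\phi:\mathbb{F}_q\to GL_q(\mathbb{R})$ is a regular permutation representation of the abelian additive group of $\mathbb{F}_q$, so $\phi(\gamma)^\top=\phi(-\gamma)$, $\phi(\gamma)\phi(\gamma')=\phi(\gamma+\gamma')$, $\sum_{\gamma\in\mathbb{F}_q}\phi(\gamma)=J_q$, and $J_q\phi(\gamma)=\phi(\gamma)J_q=J_q$. Second, the Latin square $L$ is symmetric with constant diagonal $x$ and $P_x=I_{q+1}$, so each $P_a$ with $a\in\mathbb{F}_q$ is a symmetric permutation matrix with zero diagonal, and $\sum_{a\in\mathbb{F}_q}P_a=J_{q+1}-I_{q+1}$.

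Items (1)--(6) all reduce to unwrapping the definition of $C_{\alpha,\alpha'}$ blockwise and applying the above facts. For (1), transposing the $(\beta,\beta')$-block and using $\phi(\gamma)^\top=\phi(-\gamma)$ gives the $(\beta,\beta')$-block of $C_{\alpha,-\alpha'}$. For (2), the shift $\alpha\mapsto a(-\beta+\beta')+\alpha$ is a bijection of $\mathbb{F}_q$, so the sum in every block collapses to $\sum_\gamma\phi(\gamma)=J_q$. For (3), the diagonal blocks ($\beta=\beta'$) simplify to $q\phi(\alpha)$, while off-diagonal blocks use the bijection $a\mapsto a(-\beta+\beta')$ to produce $J_q$. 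Item (4) is the homomorphism property: $\phi(a(-\beta+\beta')+\alpha)\phi(a(-\beta'+\beta'')+\alpha')=\phi(a(-\beta+\beta'')+\alpha+\alpha')$ is independent of $\beta'$, so summing over $q$ values of $\beta'$ introduces the factor $q$. For (5), the same telescoping produces $\phi((a-a')\beta'+\mathrm{const})$; since $a\neq a'$, the map $\beta'\mapsto(a-a')\beta'$ permutes $\mathbb{F}_q$, and summing yields $J_q$ in every block. For (6), the nonzero blocks of $\phi(\alpha)\otimes J_q$ are all $J_q$, and $J_q\phi(\gamma)=\phi(\gamma)J_q=J_q$ absorbs each factor coming from $C_{a,\alpha'}$; the unit row (or column) sums of the permutation matrix $\phi(\alpha)$ then collapse every block of the product to $J_q$.

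Item (7) is of a slightly different flavor and is, I expect, the only (mild) obstacle. Because $L$ is symmetric, each $P_a$ with $a\in\mathbb{F}_q$ satisfies $P_a=P_a^\top$, and being a permutation matrix it therefore satisfies $P_a^2=P_aP_a^\top=I_{q+1}$; that is, every such $P_a$ is an involution. Expanding
\begin{align*}
(J_{q+1}-I_{q+1})^2=\Bigl(\sum_{a\in\mathbb{F}_q}P_a\Bigr)^{2}=\sum_{a\in\mathbb{F}_q}P_a^2+\sum_{\substack{a,b\in\mathbb{F}_q\\ a\neq b}}P_aP_b,
\end{align*}
and combining $(J_{q+1}-I_{q+1})^2=(q-1)J_{q+1}+I_{q+1}$ with $\sum_{a\in\mathbb{F}_q}P_a^2=qI_{q+1}$ yields $\sum_{a\neq b}P_aP_b=(q-1)(J_{q+1}-I_{q+1})$, as required.
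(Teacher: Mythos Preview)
Your proof is correct and, for parts (1) and (2), proceeds exactly as the paper does, by unwrapping the block definition of $C_{\alpha,\alpha'}$ and using the regular-representation identities for $\phi$. For parts (3)--(7) the paper does not argue at all but simply refers to \cite[Lemma~6.1]{KS2018}; your direct block computations for (3)--(6) and the involution/expansion argument for (7) (using $P_a^2=I_{q+1}$ from symmetry of the Latin square and $\sum_{a\in\mathbb{F}_q}P_a=J_{q+1}-I_{q+1}$) fill that gap cleanly and are entirely in the same spirit.
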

\begin{proof}
(1):  
For $\alpha,\alpha',\beta,\beta'\in\mathbb{F}_q$,
\begin{align*}
[C_{\alpha,\alpha'}]_{\beta\beta'}^\top=\phi(\alpha(-\beta+\beta')+\alpha')^\top=\phi(\alpha(-\beta'+\beta)-\alpha')=[C_{\alpha,-\alpha'}]_{\beta',\beta}.
\end{align*}
Thus $C_{\alpha,\alpha'}^\top=C_{\alpha,-\alpha'}$ holds. 

(2): For $\beta,\beta'\in\mathbb{F}_q$, 
\begin{align*}
    [\sum_{\alpha\in\mathbb{F}_q}C_{a,\alpha}]_{\beta\beta'}&=\sum_{\alpha\in\mathbb{F}_q}[C_{a,\alpha}]_{\beta\beta'}=\sum_{\alpha\in\mathbb{F}_q}\phi(a(\beta-\beta')+\alpha)=\sum_{\alpha\in\mathbb{F}_q}\phi(\alpha)=J_{q}. 
\end{align*}
Thus $\sum_{\alpha\in\mathbb{F}_q}C_{a,\alpha}=J_{q^2}$ holds.
For the proof (3)-(7), see \cite[Lemma~6.1]{KS2018}. 
\end{proof}

We are now ready to focus on $N_{\alpha}$'s properties. 
\begin{theorem}\label{thm:1}
\begin{enumerate}
\item $\sum_{\alpha\in\mathbb{F}_q}N_\alpha=J_{(q+1)q^2}$. 
\item For any $\alpha\in\mathbb{F}_q$, $N_{\alpha}^\top=N_{-\alpha}$. 
\item For any $\alpha,\beta\in\mathbb{F}_q$, 
\begin{align*}
N_{\alpha} N_{\beta}&=
q^2 I_{q+1}\otimes  I_q\otimes \phi(\alpha+\beta)+q I_{q+1}\otimes (\phi(\alpha+\beta)-I_q)\otimes J_q\\
&+q I_{q+1}\otimes J_{q^2}+(q+1)J_{(q+1)q^2}.
\end{align*}
\end{enumerate}
In particular, $N_\alpha$ ($\alpha\in \mathbb{F}_q^*$) is the adjacency matrix of a divisible design digraph with parameters $((q+1)q^2,(q+1)^2,2q+1,q+1,q+1,q^2)$, and $N_0$ is that of a divisible design graph with the same parameters.  
\end{theorem}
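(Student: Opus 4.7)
The strategy is to push every claim through the block decomposition $N_\alpha=\sum_{a\in S}P_a\otimes C_{a,\alpha}$, with the uniform conventions $P_x=I_{q+1}$ and $C_{x,\alpha}=\phi(\alpha)\otimes J_q$. Once this is in place, Lemma~\ref{lem:1} (together with the two elementary facts $P_a^\top=P_a$ and $P_a^2=I_{q+1}$, valid because each $P_a$ is a symmetric permutation matrix) does essentially all of the work.

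Parts (1) and (2) are essentially one-liners. For (1), summing the decomposition over $\alpha$ yields $\sum_\alpha N_\alpha=\sum_{a\in S}P_a\otimes(\sum_\alpha C_{a,\alpha})$; Lemma~\ref{lem:1}(2) handles $a\in\mathbb{F}_q$, and a direct computation handles $a=x$, both giving $J_{q^2}$, so the result collapses to $J_{q+1}\otimes J_{q^2}=J_{(q+1)q^2}$. For (2), transposing each tensor block and applying Lemma~\ref{lem:1}(1) together with $\phi(\alpha)^\top=\phi(-\alpha)$ (since $\phi$ has values in permutation matrices) sends every $C_{a,\alpha}$ to $C_{a,-\alpha}$, yielding $N_\alpha^\top=N_{-\alpha}$.

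Part (3) carries the real content. I expand
\[N_\alpha N_\beta=\sum_{a,b\in S}P_aP_b\otimes C_{a,\alpha}C_{b,\beta}\]
and split into the diagonal piece $a=b$ and the off-diagonal piece $a\neq b$. On the diagonal, $P_a^2=I_{q+1}$; Lemma~\ref{lem:1}(4) collapses $C_{a,\alpha}C_{a,\beta}$ to $qC_{a,\alpha+\beta}$ for $a\in\mathbb{F}_q$, while $C_{x,\alpha}C_{x,\beta}=q\phi(\alpha+\beta)\otimes J_q$ by a direct tensor calculation, and summing the $a\in\mathbb{F}_q$ contributions via Lemma~\ref{lem:1}(3) is what produces the $q^2I_{q+1}\otimes I_q\otimes\phi(\alpha+\beta)$ piece. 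Off the diagonal, every product $C_{a,\alpha}C_{b,\beta}$ equals $J_{q^2}$ (by Lemma~\ref{lem:1}(5) when $a,b\in\mathbb{F}_q$ are distinct and by Lemma~\ref{lem:1}(6) when one of them is $x$), so this piece factors as $\bigl(\sum_{a\neq b,\ a,b\in S}P_aP_b\bigr)\otimes J_{q^2}$. A short count using Lemma~\ref{lem:1}(7) together with the contributions of pairs containing $x$ simplifies this to $(q+1)(J_{q+1}-I_{q+1})\otimes J_{q^2}$. Regrouping tensor factors and absorbing $J_q\otimes J_q$ into $J_{q^2}$ then delivers the stated identity.

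The DDD/DDG statement follows by specialising (3) to $\beta=-\alpha$, which kills the middle term and puts $N_\alpha N_\alpha^\top$ in the canonical form $kI+\lambda_1(I_m\otimes J_n-I)+\lambda_2(J-I_m\otimes J_n)$ of Section~\ref{sec:ddd}; reading off coefficients recovers the claimed parameters. Asymmetry when $\alpha\in\mathbb{F}_q^*$ holds because the permutation matrices $\phi(a(\beta'-\beta)+\alpha)$ and $\phi(a(\beta'-\beta)-\alpha)$ are distinct for $\alpha\neq 0$ in odd characteristic, so $N_\alpha$ and $N_{-\alpha}=N_\alpha^\top$ have disjoint supports and $N_\alpha+N_\alpha^\top$ is a $(0,1)$-matrix; for $\alpha=0$, part (2) gives $N_0^\top=N_0$, so $N_0$ is symmetric and is thus a divisible design graph. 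The main obstacle is really just the bookkeeping in part (3) — keeping the three tensor slots straight and correctly assembling the off-diagonal contributions from the three types of pairs $(a,b)\in S\times S$ with $a\neq b$.
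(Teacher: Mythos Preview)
Your argument is correct and is essentially the paper's own proof. Both proceed by expanding $N_\alpha N_\beta$ through the block decomposition and invoking the various parts of Lemma~\ref{lem:1}; the only organisational difference is that you keep the symbol $x$ inside $S$ throughout and split the double sum $\sum_{a,b\in S}$ into the diagonal and the three off-diagonal types, whereas the paper separates the $x$-term $I_{q+1}\otimes\phi(\alpha)\otimes J_q$ from the outset and expands the product as four cross terms before splitting $\sum_{a,b\in\mathbb{F}_q}$ into $a=b$ and $a\neq b$. The two computations are line-by-line equivalent, and your additional remarks on asymmetry (disjointness of $\phi(\gamma+\alpha)$ and $\phi(\gamma-\alpha)$ for $2\alpha\neq0$) and the $\beta=-\alpha$ specialisation make explicit what the paper leaves to ``In particular''.
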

\begin{proof}
(1): It follows from Lemma~\ref{lem:1}. 

(2): It follows from Lemma~\ref{lem:1} and the property that the matrices $P_a$ are symmetric for $a\in\mathbb{F}_q$ and $\alpha\in\mathbb{F}_q$. 

(3): We use Lemma~\ref{lem:1} to obtain:  
\begin{align*}
N_{\alpha} N_{\beta} 
&=(I_{q+1}\otimes \phi(\alpha)\otimes J_q+\sum_{a\in \mathbb{F}_q} P_a\otimes C_{a,\alpha})(I_{q+1}\otimes \phi(\beta)\otimes J_q+\sum_{b\in \mathbb{F}_q} P_b\otimes C_{b,\beta})\\
&=q I_{q+1}\otimes \phi(\alpha+\beta)\otimes J_q+\sum_{a\in \mathbb{F}_q} P_a\otimes C_{a,\alpha}( \phi(\beta)\otimes J_q)+\sum_{b\in \mathbb{F}_q}P_{b}\otimes( \phi(\alpha)\otimes J_q)C_{b,\beta}\\
&\quad +\sum_{a,b\in \mathbb{F}_q} P_{a}P_{b}\otimes C_{a,\alpha} C_{b,\beta} \\
&=qI_{q+1}\otimes \phi(\alpha+\beta)\otimes J_q+\sum_{a\in \mathbb{F}_q} P_a\otimes J_{q^2}+\sum_{b\in \mathbb{F}_q}P_{b}\otimes J_{q^2}+\sum_{a,b\in \mathbb{F}_q} P_{a}P_{b}\otimes C_{a,\alpha}C_{b,\beta}\\
&=qI_{q+1}\otimes \phi(\alpha+\beta)\otimes J_q+2(J_{q+1}-I_{q+1})\otimes J_{q^2} +\sum_{a,b\in \mathbb{F}_q} P_{a}P_{b}\otimes C_{a,\alpha}C_{b,\beta}.
\end{align*}
The third term of the above is 
\begin{align*}
&\sum_{a\in\mathbb{F}_q} P_{a}^2\otimes C_{a,\alpha}C_{a,\beta}+\sum_{a,b\in\mathbb{F}_q,a\neq b} P_{a}P_{b}\otimes C_{a,\alpha}C_{b,\beta}\\
&=\sum_{a\in\mathbb{F}_q} I_{q+1}\otimes q C_{a,\alpha+\beta}+\sum_{a,b\in\mathbb{F}_q,a\neq b} P_{a}P_{b}\otimes J_{q^2}\\
&=q I_{q+1}\otimes (q I_q\otimes \phi(\alpha+\beta)+(J_q-I_q)\otimes J_q)+(q-1)(J_{q+1}-I_{q+1})\otimes J_{q^2}\\
&=q^2 I_{q+1}\otimes I_q\otimes \phi(\alpha+\beta)+I_{q+1}\otimes J_{q^2}-qI_{q+1}\otimes I_q\otimes J_q+(q-1)J_{(q+1)q^2}. 
\end{align*}
Therefore,  
\begin{align}
N_{\alpha} N_{\beta} 
&=q^2 I_{q+1}\otimes  I_q\otimes \phi(\alpha+\beta)+q I_{q+1}\otimes (\phi(\alpha+\beta)-I_q)\otimes J_q\notag\\
&+q I_{q+1}\otimes J_{q^2}+(q+1)J_{(q+1)q^2}.\label{eq:gh1} 
\end{align}
This completes the proof. 
\end{proof}

We define $(0,1)$-matrices $A_{\alpha,i}$ ($\alpha\in\mathbb{F}_q,i\in\{0,1,2\}$) as 
\begin{align*}
A_{\alpha,0}&=I_{q(q+1)}\otimes \phi(\alpha),\\
A_{\alpha,1}&=I_{q+1}\otimes \phi(\alpha)\otimes J_q,\\
A_{\alpha,2}&=N_{\alpha}-I_{q+1}\otimes \phi(\alpha)\otimes J_q.
\end{align*}
Note that 
$A_{0,0}=I_{(q+1)q^2}$ and $\sum_{\alpha\in \mathbb{F}_q}A_{\alpha,0}=A_{0,1}$.  
\begin{theorem}\label{thm:as}
The set of matrices $\{A_{\alpha,0},A_{\beta,1},A_{\alpha,2}\mid \alpha\in\mathbb{F}_q,\beta\in\mathbb{F}_q^*\}$ forms a commutative association scheme with $3q-2$ classes.
\end{theorem}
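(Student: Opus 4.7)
The plan is to verify the four association-scheme axioms together with commutativity by direct computation, using Theorem~\ref{thm:1} for the heaviest case. Axiom (i) is immediate since $A_{0,0} = I_{(q+1)q^2}$. For axiom (ii), observe that $\sum_{\alpha\in\mathbb{F}_q}\phi(\alpha) = \bigotimes_{i=1}^m\sum_{x\in\mathbb{F}_p}U_p^x = J_q$, so $\sum_\alpha A_{\alpha,1} = I_{q+1}\otimes J_{q^2}$; combined with $\sum_\alpha A_{\alpha,0} = A_{0,1}$ (noted in the text) and Theorem~\ref{thm:1}(1), the total sum telescopes to $\sum_\alpha N_\alpha = J_{(q+1)q^2}$. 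For axiom (iii), each $U_p^x$ is a permutation matrix, so $\phi(\alpha)^\top = \phi(-\alpha)$, yielding $A_{\alpha,0}^\top = A_{-\alpha,0}$ and $A_{\alpha,1}^\top = A_{-\alpha,1}$; Theorem~\ref{thm:1}(2) then gives $A_{\alpha,2}^\top = A_{-\alpha,2}$.

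For axiom (iv) and commutativity, I would compute all six product types. Using the tensor structure and $\phi(\alpha)J_q = J_q\phi(\alpha) = J_q$, four collapse at once:
\begin{align*}
A_{\alpha,0}A_{\beta,0} &= A_{\alpha+\beta,0}, & A_{\alpha,0}A_{\beta,1} &= A_{\beta,1}, \\
A_{\alpha,0}A_{\beta,2} &= A_{\alpha+\beta,2}, & A_{\alpha,1}A_{\beta,1} &= q A_{\alpha+\beta,1},
\end{align*}
where $(I_q\otimes\phi(\alpha))C_{a,\beta} = C_{a,\alpha+\beta}$ handles the third identity, and $A_{0,1}$ is interpreted as $\sum_\gamma A_{\gamma,0}$ whenever $\alpha+\beta=0$. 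Each is manifestly symmetric in $\alpha,\beta$. Next, Lemma~\ref{lem:1}(6) gives $(\phi(\alpha)\otimes J_q)C_{a,\beta} = J_{q^2}$, whence $A_{\alpha,1}N_\beta = qA_{\alpha+\beta,1} + (J_{q+1}-I_{q+1})\otimes J_{q^2} = qA_{\alpha+\beta,1} + \sum_\gamma A_{\gamma,2}$; subtracting $A_{\alpha,1}A_{\beta,1}$ yields $A_{\alpha,1}A_{\beta,2} = \sum_\gamma A_{\gamma,2}$, again symmetric. Finally, the remaining product is obtained from
\begin{equation*}
A_{\alpha,2}A_{\beta,2} = N_\alpha N_\beta - A_{\alpha,1}N_\beta - N_\alpha A_{\beta,1} + A_{\alpha,1}A_{\beta,1},
\end{equation*}
substituting Theorem~\ref{thm:1}(3) and the identities just derived, then translating each all-ones contribution via $I_{q+1}\otimes I_q\otimes J_q = \sum_\gamma A_{\gamma,0}$, $I_{q+1}\otimes J_{q^2} = \sum_\gamma A_{\gamma,1}$, and $J_{(q+1)q^2} = \sum_\gamma(A_{\gamma,1}+A_{\gamma,2})$.

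The main obstacle is the bookkeeping for $A_{\alpha,2}A_{\beta,2}$: several $J$-type contributions from Theorem~\ref{thm:1}(3) must be re-expressed in the basis $\{A_{\gamma,i}\}$ without double-counting, remembering that $A_{0,1}$ is not itself a basis element but equals $\sum_\gamma A_{\gamma,0}$. Once this translation is carried out, the resulting coefficients depend on $\alpha,\beta$ only through $\alpha+\beta$, so the product is commutative, and the total count of non-identity matrices is $(q-1)+(q-1)+q = 3q-2$, matching the stated number of classes.
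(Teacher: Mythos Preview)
Your proposal is correct and follows essentially the same route as the paper: verify axioms (i)--(iii) directly, reduce the six product types using the tensor identities and Lemma~\ref{lem:1}(6), and handle $A_{\alpha,2}A_{\beta,2}$ by expanding $(N_\alpha-A_{\alpha,1})(N_\beta-A_{\beta,1})$ and invoking Theorem~\ref{thm:1}(3). One small wording point: for the mixed products such as $A_{\alpha,0}A_{\beta,2}$, ``symmetric in $\alpha,\beta$'' is not quite the right justification for commutativity---what you actually need (and what the same tensor calculation gives) is $C_{a,\beta}(I_q\otimes\phi(\alpha))=(I_q\otimes\phi(\alpha))C_{a,\beta}=C_{a,\alpha+\beta}$, so $A_{\beta,2}A_{\alpha,0}=A_{\alpha,0}A_{\beta,2}$.
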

\begin{proof}
The matrices $A_{\alpha,0}$'s and $A_{\alpha,1}$'s are non-zero $(0,1)$-matrices.  
By the definition of $N_{\alpha}$,  $A_{\alpha,2}$'s are non-zero $(0,1)$-matrices. And it holds that $\sum_{\alpha\in\mathbb{F}_q}(A_{\alpha,0}+A_{\alpha,2})+\sum_{\beta\in\mathbb{F}_q^*}A_{\beta,1}=J_{(q+1)q^2}$. 
For $\alpha\in \mathbb{F}_q$ and $i\in\{0,1,2\}$, $A_{\alpha,i}^\top=A_{-\alpha,i}$. 
Next we will show that $\mathcal{A}:=\text{span}_{\mathbb{C}}\{A_{\alpha,i} \mid \alpha\in\mathbb{F}_q,i\in\{0,1,2\}\}$ is closed under the matrix multiplication and the matrices are commuting.\footnote{Note that we involve $A_{0,1}$ in the space $\mathcal{A}$ so that the description of the proof of (iv) becomes easier.}  
For $\alpha,\beta\in\mathbb{F}_q$, the following are easy to see: 
\begin{align*}
A_{\alpha,0}A_{\beta,0}&=A_{\beta,0}A_{\alpha,0}=A_{\alpha+\beta,0},\\
A_{\alpha,0}A_{\beta,1}&=A_{\beta,1}A_{\alpha,0}=A_{\beta,1},\\
A_{\alpha,0}A_{\beta,2}&=A_{\beta,2}A_{\alpha,0}=A_{\alpha+\beta,2},\\
A_{\alpha,1}A_{\beta,1}&=q A_{\alpha+\beta,1}.
\end{align*}
By Lemma~\ref{lem:1}(6), 
\begin{align*}
A_{\alpha,1}A_{\beta,2}&=A_{\beta,2} A_{\alpha,1}=(J_{q+1}-I_{q+1})\otimes J_{q^2}. 
\end{align*}
Finally by Theorem~\ref{thm:1}(3), 
\begin{align*}
N_{\alpha}N_{\beta}=q^2 A_{\alpha+\beta,0}+q(A_{\alpha+\beta,1}-\sum_{\alpha\in \mathbb{F}_q}A_{\alpha,0})+q\sum_{\alpha\in\mathbb{F}_q}A_{\alpha,1}+(q+1)J_{(q+1)q^2}. 
\end{align*}
Now we calculate $A_{\alpha,2}A_{\beta,2}$: 
\begin{align*}
A_{\alpha,2}A_{\beta,2}&=(N_{\alpha}-A_{\alpha,1})(N_{\beta}-A_{\beta,1})\\
&=N_{\alpha}N_{\beta}-N_{\alpha}A_{\beta,1}-A_{\alpha,1} N_{\beta}+A_{\alpha,1}A_{\beta,1}\\
&=N_{\alpha}N_{\beta}-2(J_{q+1}-I_{q+1})\otimes J_{q^2}+A_{\alpha+\beta,1},
\end{align*} 
from which it holds that $A_{\alpha,2}A_{\beta,2}\in\mathcal{A}$. 
Therefore, $\mathcal{A}$ is closed under matrix multiplication and the matrices are commuting. This completes the proof.  
\end{proof}

To describe the primitive idempotents,  let $F_{\alpha,i}$ ($\alpha\in\mathbb{F}_q,i\in\{0,1,2\}$) be 
\begin{align*}
F_{\alpha,i}&=\sum_{\gamma\in\mathbb{F}_q} \chi_{\alpha}(\gamma)A_{\gamma,i}.  
\end{align*}

\begin{lemma}\label{lemma:ghf}
The following hold. 
\begin{align*}
        F_{\alpha,0}F_{\beta,0}& =\delta_{\alpha,\beta}qF_{\alpha,0},\\ 
        F_{\alpha,0}F_{\beta,1}& =F_{\beta,1}F_{\alpha,0}=\delta_{\alpha,0} q F_{\beta,1},\\ 
        F_{\alpha,0}F_{\beta,2}& =F_{\beta,2}F_{\alpha,0}=\delta_{\alpha,\beta}qF_{\alpha,2},\\ 
        F_{\alpha,1}F_{\beta,1}& =\delta_{\alpha,\beta}q^2F_{\alpha,1},\\ 
        F_{\alpha,1}F_{\beta,2}& =F_{\beta,2}F_{\alpha,1}=\delta_{\alpha,0}\delta_{\beta,0}q^2 F_{0,2},\\ 
        F_{\alpha,2}F_{\beta,2}& =\delta_{\alpha,\beta}q^3F_{\alpha,0}+\delta_{\alpha,0}\delta_{\beta,0}q^2(-qF_{0,0}+qF_{0,1}+(q-1)F_{0,2}). 
\end{align*}

\end{lemma}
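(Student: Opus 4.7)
The plan is to expand each product $F_{\alpha,i}F_{\beta,j}$ as a double sum $\sum_{\gamma,\delta}\chi_\alpha(\gamma)\chi_\beta(\delta)A_{\gamma,i}A_{\delta,j}$, substitute the multiplication table of the $A_{\gamma,i}$'s already assembled in the proof of Theorem~\ref{thm:as}, and then collapse the sums using the two character identities
\[
\sum_{\gamma\in\mathbb{F}_q}\chi_\alpha(\gamma)=q\delta_{\alpha,0},\qquad \sum_{\gamma\in\mathbb{F}_q}\chi_\alpha(\gamma)\chi_\beta(\epsilon-\gamma)=q\delta_{\alpha,\beta}\chi_\beta(\epsilon),
\]
the second one following from $\chi_\beta(\epsilon-\gamma)=\chi_\beta(\epsilon)\chi_{-\beta}(\gamma)$ and $\chi_\alpha\chi_{-\beta}=\chi_{\alpha-\beta}$.

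Formulas (1)--(5) then fall out immediately. For (1), (3), (4) the relevant rule has the shape $A_{\gamma,i}A_{\delta,j}=c\,A_{\gamma+\delta,k}$ with $c$ a scalar; after the substitution $\epsilon=\gamma+\delta$ the convolution identity reduces the double sum to $cq\,\delta_{\alpha,\beta}F_{\alpha,k}$. For (2) the rule $A_{\gamma,0}A_{\delta,1}=A_{\delta,1}$ is independent of $\gamma$, so the $\gamma$-sum gives $q\delta_{\alpha,0}$ and the $\delta$-sum is $F_{\beta,1}$. For (5), using the identity $A_{\gamma,1}A_{\delta,2}=(J_{q+1}-I_{q+1})\otimes J_{q^2}=F_{0,2}$ (the last equality holding since $F_{0,2}=\sum_\gamma A_{\gamma,2}=J_{(q+1)q^2}-F_{0,1}=(J_{q+1}-I_{q+1})\otimes J_{q^2}$), the product is constant in both $\gamma$ and $\delta$ and the two single sums separate into $q^2\delta_{\alpha,0}\delta_{\beta,0}F_{0,2}$.

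The real work is (6). I first derive
\[
A_{\gamma,2}A_{\delta,2}=q^2A_{\gamma+\delta,0}-qF_{0,0}+qF_{0,1}+(q-1)F_{0,2}
\]
by writing $A_{\gamma,2}A_{\delta,2}=(N_\gamma-A_{\gamma,1})(N_\delta-A_{\delta,1})$, substituting Theorem~\ref{thm:1}(3) for $N_\gamma N_\delta$, and collecting the cross terms $N_\gamma A_{\delta,1}=A_{\gamma,1}N_\delta=qA_{\gamma+\delta,1}+F_{0,2}$ (which in turn follows from $A_{\gamma,1}A_{\delta,1}=qA_{\gamma+\delta,1}$ together with $A_{\gamma,2}A_{\delta,1}=A_{\gamma,1}A_{\delta,2}=F_{0,2}$). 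Along the way I rewrite constant summands via the basis identities $F_{0,0}=I_{q+1}\otimes I_q\otimes J_q$, $F_{0,1}=I_{q+1}\otimes J_{q^2}$, and $F_{0,1}+F_{0,2}=J_{(q+1)q^2}$. Plugging this expression into the double sum for $F_{\alpha,2}F_{\beta,2}$, the $q^2A_{\gamma+\delta,0}$ piece yields $q^3\delta_{\alpha,\beta}F_{\alpha,0}$ via the convolution identity, while the three constant summands are independent of $\gamma,\delta$ and survive only when both character sums are non-zero, producing $q^2\delta_{\alpha,0}\delta_{\beta,0}(-qF_{0,0}+qF_{0,1}+(q-1)F_{0,2})$.

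The principal obstacle is precisely the derivation of the expression for $A_{\gamma,2}A_{\delta,2}$: one must expand $N_\gamma N_\delta$ via Theorem~\ref{thm:1}(3), split the $J_{(q+1)q^2}$ term cleanly as $F_{0,1}+F_{0,2}$, and track the $F_{0,0},F_{0,1},F_{0,2}$ contributions so that the three constant coefficients in front of the Kronecker deltas come out as $-q$, $q$, and $q-1$. Once this identity is in hand, the rest of (6) is the same routine application of the two character identities used in cases (1)--(5).
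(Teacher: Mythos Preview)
The paper states Lemma~\ref{lemma:ghf} without proof, so there is nothing to compare against directly; your approach---expanding each $F_{\alpha,i}F_{\beta,j}$ via the multiplication rules for the $A_{\gamma,i}$ assembled in the proof of Theorem~\ref{thm:as} and then collapsing with character orthogonality---is precisely the routine computation the paper's setup anticipates, and your identification of the formula $A_{\gamma,2}A_{\delta,2}=q^2A_{\gamma+\delta,0}-qF_{0,0}+qF_{0,1}+(q-1)F_{0,2}$ as the only nontrivial step is correct. One caution: if you substitute the displayed formula of Theorem~\ref{thm:1}(3) verbatim (with the term $(q+1)J_{(q+1)q^2}$), the $F_{0,1}$ coefficient does not come out to $q$; the proof of Theorem~\ref{thm:1}(3) itself, followed line by line, yields $N_\gamma N_\delta=q^2A_{\gamma+\delta,0}+qA_{\gamma+\delta,1}-qF_{0,0}+qF_{0,1}+(q+1)F_{0,2}$, which then gives your claimed identity for $A_{\gamma,2}A_{\delta,2}$ and hence formula~(6) exactly as you describe.
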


For $i\in\{0,1,2\}, j\in\{2,3,4\},\alpha\in\mathbb{F}_q^*$, let $E_{i},E_{\alpha,j}$ be 
\begin{align*}
E_0&=\frac{1}{(q+1)q^2}J_{(q+1)q^2}=\frac{1}{(q+1)q^2}(F_{0,1}+F_{0,2}),\\
E_1&=\frac{1}{(q+1)q^2}(q F_{0,1}-F_{0,2}),\\
E_{\beta,2}&=\frac{1}{(q+1)q^2}(q+1)F_{\beta,1},\\
E_{\beta,3}&=\frac{1}{(q+1)q^2}\left(\frac{q(q+1)}{2} F_{\beta,0}+\frac{q+1}{2}F_{\beta,2}\right),\\
E_{\beta,4}&=\frac{1}{(q+1)q^2}\left(\frac{q(q+1)}{2} F_{\beta,0}-\frac{q+1}{2}F_{\beta,2}\right).
\end{align*}
\begin{theorem}\label{thm:pi}
The matrices $E_0,E_1,E_{\beta,2},E_{\beta,3},E_{\beta,4}$ ($\beta\in\mathbb{F}_q^*$) are the primitive idempotents of the adjacency algebra of the association scheme. 
\end{theorem}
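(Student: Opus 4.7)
The plan is to verify that the $3q-1$ matrices $E_0$, $E_1$, and $E_{\beta,2}, E_{\beta,3}, E_{\beta,4}$ ($\beta\in\mathbb{F}_q^*$) form a complete orthogonal system of idempotents in the Bose-Mesner algebra $\mathcal{A}$. Since $\mathcal{A}$ has dimension $3q-1$ (the scheme has $3q-2$ classes by Theorem~\ref{thm:as}), once this is done each matrix is automatically primitive: there is no room within $\mathcal{A}$ for any further orthogonal refinement. The workhorse throughout is Lemma~\ref{lemma:ghf}.

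Each proposed idempotent lives in a single ``character fiber'': $E_0, E_1$ lie in $\mathrm{span}\{F_{0,1}, F_{0,2}\}$ (notably, not involving $F_{0,0}$), while for each $\beta\in\mathbb{F}_q^*$ the triple $E_{\beta,2}, E_{\beta,3}, E_{\beta,4}$ sits in $\mathrm{span}\{F_{\beta,0}, F_{\beta,1}, F_{\beta,2}\}$. The only product in Lemma~\ref{lemma:ghf} violating the $\alpha$-grading is $F_{0,0}F_{\beta,1} = qF_{\beta,1}$, but since $F_{0,0}$ appears in none of the $E$'s, inter-fiber products vanish. Within the $\beta \neq 0$ fiber the relations $F_{\beta,0}^2 = qF_{\beta,0}$, $F_{\beta,1}^2 = q^2 F_{\beta,1}$, $F_{\beta,2}^2 = q^3 F_{\beta,0}$, $F_{\beta,0}F_{\beta,2} = qF_{\beta,2}$, and $F_{\beta,0}F_{\beta,1} = F_{\beta,1}F_{\beta,2} = 0$ reduce the checks $E_{\beta,j}^2 = E_{\beta,j}$ and $E_{\beta,j}E_{\beta,k} = 0$ ($j \neq k$) to direct algebra; note in particular that $E_{\beta,3} + E_{\beta,4} = \frac{1}{q}F_{\beta,0}$, which streamlines the computations. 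For $\beta=0$ the analogous verification of $E_0^2 = E_0$, $E_1^2 = E_1$, $E_0 E_1 = 0$ uses the modified identity $F_{0,2}^2 = q^3 F_{0,1} + q^2(q-1)F_{0,2}$ (from Lemma~\ref{lemma:ghf}(6) at $\alpha=\beta=0$).

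For completeness, collapsing within each fiber gives $E_0 + E_1 = \frac{1}{q^2}F_{0,1}$ and, for $\beta\neq 0$, $E_{\beta,2} + E_{\beta,3} + E_{\beta,4} = \frac{1}{q^2}F_{\beta,1} + \frac{1}{q}F_{\beta,0}$. Summing, applying character orthogonality in the form $\sum_{\alpha\in\mathbb{F}_q} F_{\alpha,i} = qA_{0,i}$, and invoking the linear dependence $F_{0,0} = A_{0,1}$ (itself a restatement of the identity $\sum_\gamma A_{\gamma,0} = A_{0,1}$ noted in Theorem~\ref{thm:as}), one obtains
\[
\sum_E E \;=\; \tfrac{1}{q^2}\sum_{\alpha\in\mathbb{F}_q} F_{\alpha,1} + \tfrac{1}{q}\sum_{\beta\in\mathbb{F}_q^*} F_{\beta,0} \;=\; \tfrac{1}{q}A_{0,1} + \left(A_{0,0} - \tfrac{1}{q}A_{0,1}\right) \;=\; I.
\]

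The main obstacle is tracking the $\alpha=0$ fiber cleanly: the linear dependence $F_{0,0} = A_{0,1}$ means the three matrices $F_{0,0}, F_{0,1}, F_{0,2}$ span only a two-dimensional subalgebra, rather than the three-dimensional one in each $\beta \neq 0$ fiber, which is precisely why only \emph{two} idempotents $E_0, E_1$ appear there instead of three. This asymmetry is encoded in the unusual $\delta_{\alpha,0}\delta_{\beta,0}$ correction term of Lemma~\ref{lemma:ghf}(6), and threading it consistently through both the idempotency checks for $E_0, E_1$ and the final identity-sum computation is the most delicate aspect of the argument.
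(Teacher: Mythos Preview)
Your proposal is correct and follows exactly the route the paper intends: the paper's entire proof is the single sentence ``It follows from Lemma~\ref{lemma:ghf},'' and you have supplied the verifications (orthogonality across the character fibers, idempotency within each fiber, and the identity sum) that this sentence leaves implicit. Your handling of the $\alpha=0$ anomaly via $F_{0,0}=A_{0,1}$ and the collapsed relation $F_{0,2}^2=q^3F_{0,1}+q^2(q-1)F_{0,2}$ is accurate and is precisely the computation the paper suppresses.
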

\begin{proof}
It follows from Lemma~\ref{lemma:ghf}. 
\end{proof}
The second eigenmatrix $Q$ is written as
\begin{align*}
Q=\bordermatrix{
                 & E_0             & E_1              & E_{\beta,2}           & E_{\beta,3} & E_{\beta,4} \cr
A_{\alpha,0} & 1       & q     & q+1   &\frac{q(q+1)}{2}\chi_{\alpha}(\beta) & \frac{q(q+1)}{2}\chi_{\alpha}(\beta) \cr
A_{\alpha',1} & 1 & q & (q+1)\chi_{\alpha'}(\beta)          & 0 & 0\cr
A_{\alpha,2} & 1       & -1     & 0  &\frac{q+1}{2}\chi_{\alpha}(\beta) & \frac{-q-1}{2}\chi_{\alpha}(\beta)\cr
},\end{align*}
where $\alpha\in \mathbb{F}_q,\alpha',\beta\in\mathbb{F}_q^*$. 

\section*{Acknowledgments}
The authors would like to thank the referees for their careful reading and helpful comments. 
Hadi Kharaghani is supported by the Natural Sciences and 
Engineering  Research Council of Canada (NSERC).  Sho Suda is supported by JSPS KAKENHI Grant Number 18K03395.


\begin{thebibliography}{99}
\bibitem{BI}
E. Bannai, T. Ito, Algebraic Combinatorics I: Association Schemes,
{Benjamin/Cummings, Menlo Park, CA,} 1984.

\bibitem{CK}
D. Crnkovi\'{c} and H. Kharaghani, 
Divisible design digraphs, 
Algebraic design theory and Hadamard matrices, 43--60, Springer Proc. Math. Stat., 133, Springer, Cham, 2015.

\bibitem{CKS} 
D. Crnkovi\'{c} and H. Kharaghani, and A. \v{S}vob, 
Divisible design Cayley digraphs, 
{\sl Discrete Math.} {\bf 343} (2020), no. 4, 111784, 8 pp.

\bibitem{D}
E. van Dam, 
Three-class association schemes, 
{\sl J. Algebraic Combin.} {\bf 10} (1999), 69--107.

\bibitem{HKM}
W.H. Haemers, H. Kharaghani, and M. Meulenberg, 
Divisible design graphs, 
{\sl J. Combin. Theory Ser. A} {\bf 118} (2011) 978--992.

\bibitem{H75}
D. G. Higman, 
Coherent configurations, Part 1: Ordinary representation theory,  
{\sl Geom. Dedicata} {\bf 4} (1975), 1--32. 

\bibitem{IK}
Yury J. Ionin and H. Kharaghani, 
Doubly regular digraphs and symmetric designs, 
{\sl J. Combin. Theory Ser. A} {\bf 101} (2003) 35--48.

\bibitem{J}
L. K. J{\o}rgensen, 
Normally regular digraphs, 
{\sl Elec.\ J.\ Combin.} {\bf 22} (2015), P4.21.


\bibitem{K}
H. Kharaghani, 
New class of weighing matrices, 
{\sl Ars.\ Combin.} {\bf 19} (1985), 69--72. 

\bibitem{KSS}
H. Kharaghani, S. Sasani and S. Suda, 
A strongly regular decomposition of the complete graph and its association scheme, 
{\sl Finite Fields Appl.} {\bf 48} (2017), 356--370.

\bibitem{KS2017}
H. Kharaghani and S. Suda, 
Linked systems of symmetric group divisible designs, {\sl J. Algebraic Combin.}  {\bf 47} (2017), no. 2, 319--343.

\bibitem{KS2018}
H. Kharaghani and S. Suda, 
Non-commutative association schemes and their fusion association schemes,  
{\sl Finite Fields Appl.} {\bf 52} (2018), 108--125. 

\bibitem{KS2019}
H. Kharaghani and S. Suda, Linked system of symmetric group divisible designs of type II, {\sl Des.\ Codes  Cryptogr.} {\bf 87} (2019), no. 10, 2341--2360. 


\bibitem{KS2020}
H. Kharaghani and S. Suda, 
Commutative association schemes obtained from twin prime powers, Fermat primes, Mersenne primes, 
{\sl Finite Fields Appl.} {\bf 63} (2020), 101631, 22 pp.



\bibitem{M}
R. Mathon, 
The systems of linked $2$-$(16,6,2)$ designs, 
{\sl Ars Combin.} {\bf 11} (1981), 131--148. 
  	
\end{thebibliography}
\end{document}